\newtheorem{definition}{Definition}
\newtheorem{lemma}{Lemma}
\newtheorem{proposition}{Proposition}
\newtheorem{theorem}{Theorem}
\newcommand{\LL}{{\mathscr{L}}}
\newcommand{\lij}[3]{#1|\!\overline{\,\vphantom{i}#2,#3}}
\DeclareMathOperator{\shape}{shape}
\title{The Latin Tableau Conjecture}
\author[1]{Timothy Y. Chow}
\author[2]{Mark G. Tiefenbruck}
\affil[1]{Center for Communications Research---Princeton, NJ}
\affil[2]{Center for Communications Research---La Jolla, CA}
\begin{document}
\maketitle

\begin{abstract}
A \emph{Latin tableau of shape~$\lambda$ and type~$\mu$}
is a Young diagram of shape~$\lambda$ in which each box contains a
single positive integer, with no repeated integers in any row or column,
and the $i$th most common integer appearing $\mu_i$ times.
Over twenty years ago, Chow et~al., in their study of a generalization
of Rota's basis conjecture that they called the wide partition conjecture,
conjectured a necessary and sufficient condition for the existence of
a Latin tableau of shape~$\lambda$ and type~$\mu$.
We report some computational evidence for this conjecture,
and prove that the conjecture correctly characterizes,
for any given~$\lambda$, at least the first four parts of~$\mu$.
\end{abstract}

\section{Introduction}

Over twenty years ago, Chow et~al.~\cite{CFGV} proposed
the \emph{wide partition conjecture},
which generalized Gian-Carlo Rota's famous \emph{basis conjecture}~\cite{Rot}.
They hoped that enlarging the context of the basis conjecture
would suggest new lines of attack (e.g., proofs by induction
on the number of boxes in the Young diagram of a partition).
Unfortunately, even the special case of the wide partition conjecture
that Chow et~al.\ called the \emph{wide partition conjecture
for free matroids} has shown itself to be difficult,
and remains open to this day.

Chow et~al.\ mentioned in passing something
they called the \emph{Latin Tableau Question};
a positive answer to the question would imply
the wide partition conjecture for free matroids.
We believe that the Latin Tableau Question, in addition to having
interesting connections with Rota's basis conjecture,
is interesting in its own right.
In the present paper, we give a self-contained presentation of the 
Latin Tableau Question (that requires no knowledge of Rota's basis
conjecture or the wide partition conjecture),
and we prove some partial results.

We begin by defining a \emph{Latin tableau}.
A \emph{partition}~$\lambda$ is a weakly decreasing finite sequence
$\lambda_1 \ge \lambda_2 \ge \cdots \ge \lambda_\ell$ of positive integers.
The individual $\lambda_i$ are called the \emph{parts} of the partition,
and the sum of the parts is denoted $|\lambda|$.
The \emph{Young diagram} or simply \emph{diagram} of a partition~$\lambda$
is a left-justified grid of boxes whose $i$th row contains $\lambda_i$ boxes.
If we color each box of the diagram of~$\lambda$ in such a way
that there are no repeated colors in any row or column,
then we call the colored diagram~$T$ a \emph{Latin tableau of shape~$\lambda$}.
The \emph{type} of~$T$ is the partition~$\mu$ where
$\mu_1$ is the number of times that the most frequently occurring color appears,
$\mu_2$ is the number of times that the next most frequently occurring color appears,
and so on.  See Figure~\ref{fig:youngdiagram} for an example.

\begin{figure}[ht]
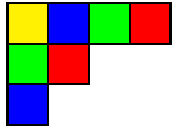

\begin{center}
\ytableausetup{centertableaux}
\begin{ytableau}
*(yellow) & *(blue) & *(green) & *(red) \\
*(green) & *(red) \\
*(blue)
\end{ytableau}
\end{center}
\caption{Latin tableau of shape $(4,2,1)$ and type $(2,2,2,1)$}
\label{fig:youngdiagram}
\end{figure}

The word \emph{Latin} is inspired by the well-known notion of
a \emph{Latin square}, but we caution the reader that even if $\lambda$
is a square shape, a Latin tableau of shape~$\lambda$ is not required
to have type$(\mu) = \lambda$, and hence is not necessarily a Latin square
in the usual sense. We can now state the following natural question.

\begin{quote}
\textbf{Given $\lambda$ and~$\mu$, what are necessary and sufficient
conditions for there to exist a Latin tableau of shape~$\lambda$
and type~$\mu$?}
\end{quote}

\noindent A conjectural answer to the above question was proposed
in \cite[Section 6]{CFGV}. To state this \emph{Latin Tableau
Conjecture}, we need a few more definitions.

Given a partition~$\lambda$ and a nonnegative integer~$n$, we let
$\alpha_n$ denote the maximum number of boxes of the diagram of~$\lambda$
that we can color using $n$ colors
without repeating a color in any row or column.
(Technically, we should write $\alpha_n(\lambda)$ to emphasize
the dependence of~$\alpha_n$ on~$\lambda$, but to avoid cluttering
the notation, we just write~$\alpha_n$ when there is no
danger of confusion.)  We define the \emph{chromatic difference sequence}
$\delta = (\delta_1, \delta_2, \ldots)$ by letting
\begin{equation*}
\delta_n := \alpha_n - \alpha_{n-1}
\end{equation*}
for each positive integer~$n$.  It is clear from the definitions that
if there exists a Latin tableau of shape~$\lambda$ and type~$\mu$,
then necessarily
\begin{align*}
\delta_1 = \alpha_1 &\ge \mu_1 \\
\delta_2 + \delta_1 = \alpha_2 &\ge \mu_1 + \mu_2 \\
\delta_3 + \delta_2 + \delta_1 = \alpha_3 &\ge \mu_1 + \mu_2 + \mu_3 \\
&\vdots
\end{align*}
i.e., $\delta$ \emph{dominates} or \emph{majorizes}~$\mu$
(written $\delta \succeq \mu$).  The Latin Tableau Conjecture
asserts that this necessary condition is sufficient.

\begin{quote}
\textbf{Latin Tableau Conjecture.} A Latin tableau of shape~$\lambda$
and type~$\mu$ exists if and only if $\delta\succeq \mu$.
\end{quote}

\noindent
In fact, Proposition~\ref{prop:dominance} below 
reduces the conjecture to showing that there
always exists a Latin tableau of shape~$\lambda$ and type~$\delta$.

More than once, when we have shown the Latin Tableau Conjecture
to someone, the reaction has been that such a natural statement
must be either known or false.
We are confident that it is not known;
Chow et~al.~\cite{CFGV} surveyed a lot of closely related literature,
and in the present paper, we establish further connections
with the literature on matching theory (see the proof of Theorem~\ref{thm:corner})
and chromatic difference sequences.
If anything, the prior work in these related areas
makes the Latin Tableau Conjecture seem unexpected,
since even slight generalizations are known to be false
(see the discussion of Figure~\ref{fig:skewshape} below).

The conjecture could of course be false,
but we have checked that it
is true for all partitions~$\lambda$ that fit inside a $12\times 12$ square.
Moreover, the main result of this paper is that
there always exists a Latin tableau of
shape~$\lambda$ and type~$\mu$ such that $\mu_i = \delta_i$ for
$1 \le i \le 4$.
There is some hope that our methods might generalize to prove the full
conjecture, but we currently rely on case analyses that become
increasingly complicated for higher terms of the CDS.

\section{Graph-Theoretic Perspective}
\label{sec:graphtheory}

In this section, we describe two different ways to think of a
Young diagram as a graph. Each way has its own merits.

All our graphs $G = \{V,E\}$ will be finite, simple, and undirected.
A \emph{coloring} of~$G$ is a map $\kappa \colon V \to \mathbb{N}$
such that $\kappa(u) \ne \kappa(v)$ whenever $u$ and~$v$ are adjacent.
(In the literature, the term \emph{proper coloring} is sometimes used,
but we will have no occasion to refer to any other type of coloring,
so we drop the word \emph{proper}.)
A \emph{stable set} or \emph{independent set} of~$G$ is a subset
$S\subseteq V$ such that whenever $u\in S$ and $v\in S$ then there
is no edge between $u$ and~$v$.
Observe that each \emph{color class} of a coloring~$\kappa$
(meaning the inverse image of some $n\in\mathbb{N}$)
is a stable set.
Given a coloring~$\kappa$, we define $\shape(\kappa)$ to be
the sequence of cardinalities of the color classes of~$\kappa$,
arranged in weakly decreasing order.

A very important concept for us is the chromatic difference
sequence of a graph~\cite{AB}.

\begin{definition}
Given a graph~$G$, we let $\alpha_i(G)$ (or simply $\alpha_i$,
if the graph under consideration is understood) denote the
maximum cardinality of a union of $i$ (disjoint) stable sets of~$G$.
The \emph{chromatic difference sequence} or \emph{CDS} of~$G$ is
the sequence $(\delta_1, \delta_2, \delta_3, \ldots)$ defined by
\begin{equation*}
\delta_i := \alpha_i - \alpha_{i-1}, \qquad \mbox{for $i\ge 1$.}
\end{equation*}
We say that $G$ is \emph{CDS-colorable} if there exists a coloring~$\kappa$
of~$G$ such that $\shape(\kappa)$ equals the CDS of~$G$.
\end{definition}

The first of our two ways of associating a graph with a partition
is given by the following definition, which establishes
a connection between the above definition of a CDS and the
definition given in the Introduction.

\begin{definition}
Given a partition~$\lambda$, the
\emph{partition graph}~$G_\lambda$ is the graph whose vertices
are the boxes of the diagram of~$\lambda$, and in which two boxes
are adjacent if and only if they lie in the same row or column.
\end{definition}

The CDS of an arbitrary graph is not necessarily weakly decreasing
(see for example \cite[Remark~2]{dW} or \cite[Figure~1]{AB}).
However, we have the following result.

\begin{proposition}
\label{prop:deWerra}
The CDS of a partition graph is itself a partition; i.e.,
it is weakly decreasing.
\end{proposition}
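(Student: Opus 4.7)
The plan is to exploit the bipartite structure that is hiding in the partition graph. Let $H$ be the bipartite graph whose vertex classes are the rows and columns of the diagram of~$\lambda$, with one edge for each box joining its row to its column; then $G_\lambda$ is precisely the line graph of~$H$. Consequently, independent sets of $G_\lambda$ correspond to matchings of~$H$, and a union of $n$ pairwise disjoint independent sets in $G_\lambda$ corresponds to a subgraph of~$H$ that admits a proper $n$-edge-coloring. By K\"onig's edge-coloring theorem, a bipartite graph is $n$-edge-colorable exactly when its maximum degree is at most~$n$, so
\[
\alpha_n \;=\; \max\bigl\{\,|F| : F\subseteq E(H),\ \text{every vertex of $H$ meets at most $n$ edges of $F$}\,\bigr\}.
\]

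Showing $\delta_n\ge\delta_{n+1}$ is equivalent to showing that $\alpha_n$ is concave, i.e., $\alpha_{n-1}+\alpha_{n+1}\le 2\alpha_n$. I would prove this by an edge-exchange argument. Fix optimizers $S,T\subseteq E(H)$ for $\alpha_{n-1}$ and $\alpha_{n+1}$; write $d_F(v)$ for the number of edges of $F\subseteq E(H)$ incident to a vertex~$v$, so $d_S(v)\le n-1$ and $d_T(v)\le n+1$. Set $Y:=S\cap T$ and $X:=S\triangle T$. The key step is to split $X$ into $X_1\sqcup X_2$ with $d_{X_i}(v)\le\lceil d_X(v)/2\rceil$ at every vertex~$v$. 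Such a balanced $2$-coloring of the edges of $X$ exists for an arbitrary graph: pair up the odd-degree vertices of~$X$ by auxiliary edges, find an Eulerian circuit in each component of the resulting graph, alternate two colors along each circuit, and then discard the auxiliary edges.

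Now set $S':=Y\cup X_1$ and $T':=Y\cup X_2$, so that $|S'|+|T'|=2|Y|+|X|=|S|+|T|$. Using $d_X(v)=d_S(v)+d_T(v)-2d_Y(v)$ and $d_S(v)+d_T(v)\le(n-1)+(n+1)=2n$, at every vertex $v$ we obtain
\[
d_{S'}(v)=d_Y(v)+d_{X_1}(v)\le d_Y(v)+\Bigl\lceil\tfrac{d_X(v)}{2}\Bigr\rceil=\Bigl\lceil\tfrac{d_S(v)+d_T(v)}{2}\Bigr\rceil\le n,
\]
and symmetrically $d_{T'}(v)\le n$. Both $S'$ and $T'$ are therefore feasible for $\alpha_n$, so $\alpha_{n-1}+\alpha_{n+1}=|S'|+|T'|\le 2\alpha_n$, which yields the desired inequality $\delta_n\ge\delta_{n+1}$.

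The main obstacle is packaging the balanced $2$-coloring lemma cleanly; once that is in hand, the degree bookkeeping is routine. An alternative route would be to observe that the feasible edge sets are the common independent sets of two partition matroids (the row- and column-partitions, each with part-capacity~$n$) and to derive concavity from the total unimodularity of the bipartite vertex-edge incidence matrix via LP theory, but the explicit Eulerian exchange above is self-contained and carries more structural information, which may be useful later when building Latin tableaux of type~$\delta$.
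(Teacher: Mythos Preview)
Your proposal is correct and is essentially the content of de~Werra's result that the paper cites without proof: the paper simply invokes \cite[Lemma~2.1]{dW}, which establishes the weakly decreasing CDS for the line graph of \emph{any} bipartite graph. Your argument---identifying $G_\lambda$ as the line graph of $B_\lambda$, translating $\alpha_n$ via K\"onig's theorem into a degree-bounded subgraph problem, and proving concavity by a balanced Eulerian $2$-coloring of the symmetric difference---is exactly the standard proof of de~Werra's lemma, so you have rediscovered the approach the paper relies on.
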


\begin{proof}
This was proved by de Werra \cite[Lemma 2.1]{dW},
who actually showed that the result holds for the
line graph of any bipartite graph.
\end{proof}

In particular, the CDS of~$G_\lambda$ coincides with the CDS of~$\lambda$
as defined in the Introduction, and the Latin Tableau Conjecture implies
(and in fact, by Proposition~\ref{prop:dominance} below, is equivalent to)
the CDS-colorability of~$G_\lambda$ for all partitions~$\lambda$.

It is well known that not every graph is CDS-colorable;
indeed, despite the considerable literature on chromatic
difference sequences, very few graphs are known to be CDS-colorable.
In Figure~\ref{fig:nondominable}, we regard the picture on the
left as a graph whose vertices are the boxes and in which two boxes
are adjacent if and only if they lie in the same row or column.
The other two pictures in Figure~\ref{fig:nondominable}
demonstrate that $\alpha_1 = 5$ and $\alpha_2 = 4 + 4 = 8$.
So the CDS is $(5,3,1)$,
but it is easy to check that there is no coloring~$\kappa$
such that $\shape(\kappa) = (5,3,1)$.
A slightly larger graph (Figure~\ref{fig:skewshape},
reproduced from~\cite{CFGV})
shows that that Latin Tableau Conjecture cannot be generalized
even to skew Young diagrams; the CDS is $(6,6,3,1)$
but there is no coloring~$\kappa$
such that $\shape(\kappa) = (6,6,3,1)$.

\begin{figure}[ht]
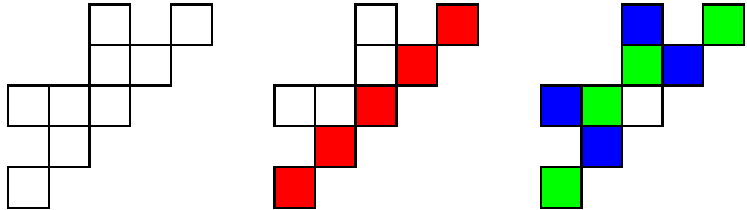

\begin{center}
\ytableausetup{centertableaux}
\begin{ytableau}
\none & \none & \    & \none & \ \\
\none & \none & \    & \         \\
\     & \     & \                \\
\none & \                        \\
\                                \\
\end{ytableau}
\qquad
\begin{ytableau}
\none & \none & \    & \none & *(red) \\
\none & \none & \    & *(red)         \\
\     & \     & *(red)                \\
\none & *(red)                        \\
*(red)                                \\
\end{ytableau}
\qquad
\begin{ytableau}
\none   & \none    & *(blue)    & \none   & *(green) \\
\none   & \none    & *(green)   & *(blue)            \\
*(blue) & *(green) & \                               \\
\none   & *(blue)                                    \\
*(green)                                             \\
\end{ytableau}
\end{center}
\caption{A non-CDS-colorable graph}
\label{fig:nondominable}
\end{figure}

\begin{figure}[ht]
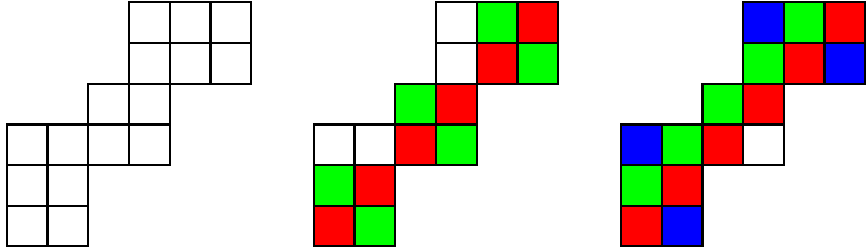

\begin{center}
\ytableausetup{centertableaux}
\begin{ytableau}
\none & \none & \none & \     & \     & \     \\
\none & \none & \none & \     & \     & \     \\
\none & \none & \     & \         \\
\     & \     & \     & \         \\
\     & \                        \\
\     & \
\end{ytableau}
\qquad
\begin{ytableau}
\none    & \none & \none    & \        & *(green) & *(red)    \\
\none    & \none & \none    & \        & *(red)   & *(green)  \\
\none    & \none & *(green) & *(red)          \\
\        & \     & *(red)   & *(green)        \\
*(green) & *(red)        \\
*(red)   & *(green)
\end{ytableau}
\qquad
\begin{ytableau}
\none    & \none    & \none    & *(blue)  & *(green) & *(red)    \\
\none    & \none    & \none    & *(green) & *(red)   & *(blue)  \\
\none    & \none    & *(green) & *(red)         \\
*(blue)  & *(green) & *(red)   & \              \\
*(green) & *(red)        \\
*(red)   & *(blue)
\end{ytableau}
\end{center}
\caption{A non-CDS-colorable skew shape}
\label{fig:skewshape}
\end{figure}

There is a second way to think of a Young diagram as a graph.

\begin{definition}
Given a partition~$\lambda$, we let
$B_\lambda$ be the bipartite graph whose vertices
are the rows and columns of the diagram of~$\lambda$, and in which
row~$i$ is adjacent to column~$j$ if and only if the diagram of~$\lambda$
contains a box in row~$i$ and column~$j$.
\end{definition}

The bipartite graph~$B_\lambda$ is a useful tool for proving
the following Proposition, which, together with Proposition~\ref{prop:deWerra},
implies that if we can prove the Latin Tableau Conjecture
in the case that $\mu = \delta$, then the full conjecture follows.

\begin{proposition}
\label{prop:dominance}
If there exists a Latin tableau with shape~$\lambda$ and type~$\mu$,
and $\mu\succeq \nu$, then
there exists a Latin tableau with shape~$\lambda$ and type~$\nu$.
\end{proposition}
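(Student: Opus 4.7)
The plan is to view a Latin tableau as a proper edge-coloring of the bipartite graph $B_\lambda$ and then effect the change of type by a standard Kempe-chain argument. Specifically, a box in position $(i,j)$ is an edge of $B_\lambda$ joining row~$i$ to column~$j$; a color class of a Latin tableau is precisely a matching of $B_\lambda$; and a Latin tableau of shape~$\lambda$ is exactly a partition of $E(B_\lambda)$ into matchings. The type of the tableau is the sorted sequence of matching sizes.

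The first step is to reduce to an elementary case. I will invoke the standard fact from the theory of the dominance lattice: if $\mu\succ\nu$ are partitions of the same integer, then there is a partition $\mu'$ with $\mu\succ\mu'\succeq\nu$ such that $\mu'$ differs from~$\mu$ in exactly two positions $a<b$, with $\mu'_a=\mu_a-1$ and $\mu'_b=\mu_b+1$ (and $\mu_a>\mu_b$). In the motivating example $\mu=(3,3,1,1)$, $\nu=(2,2,2,2)$, for instance, the choice $a=2$, $b=3$ is forced so that $\mu'$ is a partition; more generally one slides the decreasing index to the end of its equal-value run and the increasing index to the start of its run. Since strict dominance is well-founded on partitions of a fixed integer, iterating this reduction reaches~$\nu$ in finitely many steps, so it suffices to convert a Latin tableau of type~$\mu$ into one of type~$\mu'$.

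For the elementary step, let $T$ be a Latin tableau of type~$\mu$, and let $M_a$ and $M_b$ be the matchings of $B_\lambda$ corresponding to the $a$-th and $b$-th color classes of~$T$, so $|M_a|>|M_b|$. Consider the subgraph $H\subseteq B_\lambda$ with edge set $M_a\cup M_b$. Every vertex of~$H$ has degree at most~$2$ (at most one edge from each matching), so $H$ is a vertex-disjoint union of paths and cycles, along which the edges alternate between $M_a$ and $M_b$. Because $B_\lambda$ is bipartite, each cycle component of~$H$ has even length and hence contributes equally to $|M_a|$ and $|M_b|$. Since $|M_a|>|M_b|$ globally, at least one path component must contain more $M_a$-edges than $M_b$-edges, i.e.\ an odd-length alternating path whose two end-edges both lie in $M_a$. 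Exchanging the two colors along this path yields new matchings $M_a'$, $M_b'$ with $|M_a'|=|M_a|-1$ and $|M_b'|=|M_b|+1$, while every other color class of~$T$ is untouched. The result is still a proper edge-coloring of $B_\lambda$, i.e.\ a Latin tableau, and its type is~$\mu'$.

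I do not expect a substantial obstacle: the two ingredients---generation of the dominance order by single-unit transfers between partitions, and the Kempe-chain swap along an alternating odd path in a bipartite graph---are both classical. The only bookkeeping subtlety is ensuring, at each elementary step, that the transfer pair $(a,b)$ is chosen so that the modified multiset of class sizes, when sorted, is a partition that still dominates~$\nu$; this is exactly what the dominance-lattice fact quoted above provides.
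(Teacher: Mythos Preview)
Your proof is correct and is essentially the same as the paper's: both reduce to a single-unit transfer via the standard dominance-order fact, identify a Latin tableau with a proper edge-coloring of $B_\lambda$, and then swap colors along an odd alternating path in the union of the two relevant matchings. Your write-up is slightly more explicit about the bookkeeping (e.g., choosing $(a,b)$ so the result remains a partition dominating~$\nu$, and invoking bipartiteness for even cycles---the latter being in fact unnecessary, since alternating colors already force even cycle length), but the argument is the same Kempe-chain swap the paper uses.
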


\begin{proof}
We need the following fact about the dominance
order on partitions~\cite[Chapter~I, (1.16)]{Mac}.
If $\mu\succeq\nu$,
then we can transform $\mu$ to~$\nu$ by performing a
(carefully chosen) sequence of operations of the following type:
take two parts of the current partition that differ in size---call
them $a$ and~$b$, with $a<b$---and replace them with $a+1$ and $b-1$
(reordering the parts into weakly decreasing order if necessary)
to get a new partition.  Given this fact, it suffices to show that
if $T$ is a Latin tableau with shape~$\lambda$ and type~$\mu$,
and colors $i$ and~$j$ occur with multiplicities $\mu_i$ and~$\mu_j$
respectively, where $\mu_i < \mu_j$, then we can recolor those boxes
in such a way that color~$i$ occurs with multiplicity $\mu_i+1$ and
color~$j$ occurs with multiplicity $\mu_j-1$.

Now consider the bipartite graph~$B_\lambda$,
and identify the coloring of the boxes of~$T$
with an edge coloring of~$B_\lambda$.
In~$B_\lambda$, the edges
with color~$i$ form a \emph{matching} (i.e., a set of edges,
no two of which touch each other), and similarly the edges with
color~$j$ form a matching that is disjoint from the first.
The union of these two matchings form a subgraph of~$B_\lambda$ in which
every vertex has degree 1 or~2, and hence each connected component
must be a cycle or a path with edges of alternating colors.
Since $\mu_i < \mu_j$, at least one of these connected components
must be a path with an odd number of edges
and with color~$j$ edges in the majority.
Now we can simply reverse the colors of the edges in this path,
and observe that this decreases the number of color~$j$ edges
by~1, increases the number of color~$i$ edges by~1, and
preserves the Latin tableau property.
\end{proof}

% In light of Proposition~\ref{prop:dominance},
%^ for the remainder of the paper, we think of the Latin Tableau Conjecture
% as the assertion that $G_\lambda$ is CDS-colorable for any partition~$\lambda$.

\section{The CDS of a Partition Graph}
\label{sec:CDSpartition}

It will be convenient to coordinatize a Young diagram;
box $(i,j)$ denotes the box in row~$i$ and column~$j$,
where we number the rows and columns starting with~$0$
(i.e., the top left box is $(0,0)$).

\begin{definition}
For $k\ge 0$, \emph{antidiagonal~$k$}
is the set of boxes \hbox{$\{(0,k)$}, \hbox{$(1,k-1)$},
\hbox{$(2,k-2)$}, \dots, \hbox{$(k,0)\}$}
whose coordinates sum to~$k$.
\end{definition}

The portion of a Young diagram lying (weakly) below and to the right
of a box $(i,j)$ plays an important role, so we introduce the following
notation for it.

\begin{definition}
For a partition $\lambda$ and nonnegative integers $i$ and~$j$, we define
\begin{equation*}
\lij{\lambda}{i}{j} := \{ (i',j') \in \lambda \mid \mbox{$i'\ge i$ and $j'\ge j$} \},
\end{equation*}
where ``$(i',j') \in \lambda$'' means that $(i',j')$ are the coordinates
of a box in the diagram of~$\lambda$.
The notation $\lij{\lambda}{i}{j}$ is pronounced,
``$\lambda$ corner $i,j$.''
\end{definition}

The following Lemma states a simple but important
fact about $\lij{\lambda}{i}{j}$.

\begin{lemma}
\label{lem:lij}
A stable set in a partition graph~$G_\lambda$ contains at most $i+j$
boxes outside $\lij{\lambda}{i}{j}$.
\end{lemma}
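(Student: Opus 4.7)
The plan is to observe that a box $(i',j')$ fails to lie in $\lij{\lambda}{i}{j}$ precisely when $i' < i$ or $j' < j$, so that the boxes of the diagram outside $\lij{\lambda}{i}{j}$ are exactly those lying in one of the first $i$ rows (rows $0,1,\ldots,i-1$) or one of the first $j$ columns (columns $0,1,\ldots,j-1$).

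Given a stable set $S$ of $G_\lambda$, I would split $S \setminus \lij{\lambda}{i}{j}$ into the subset $A$ of boxes with row index less than $i$, and the subset $B$ of boxes with column index less than $j$. Since by the definition of $G_\lambda$ any two boxes in a common row are adjacent, $S$ contains at most one box in each row, hence $|A| \le i$; symmetrically, $|B| \le j$. The union bound $|A \cup B| \le |A| + |B| \le i+j$ then gives the lemma.

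The argument is essentially immediate from the definitions, so I do not anticipate any real obstacle; the only minor point is to notice that $A$ and $B$ may overlap (on boxes with both $i' < i$ and $j' < j$), but this only helps the bound. The lemma will be useful later because it provides an upper bound on the contribution of a stable set to the portion of $\lambda$ near the top-left corner, which complements bounds on $\lij{\lambda}{i}{j}$ coming from the interior.
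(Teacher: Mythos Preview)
Your proof is correct and is essentially identical to the paper's own argument: the complement of $\lij{\lambda}{i}{j}$ is contained in the union of the first $i$ rows and the first $j$ columns, and a stable set meets each row and each column at most once, giving the bound $i+j$. The paper phrases this in two sentences without explicitly naming the sets $A$ and $B$, but the content is the same.
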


\begin{proof}
The portion of the diagram of~$\lambda$ outside $\lij{\lambda}{i}{j}$
is contained in the union of $i$ rows and $j$ columns.
But at most $i$ elements of the stable set can lie in $i$ rows
and at most $j$ elements of the stable set can lie in $j$ columns.
\end{proof}

Lemma~\ref{lem:lij} implies a useful characterization of~$\delta_1$,
the first term of the CDS.

\begin{proposition}
\label{prop:cdsk}
Let $k$ be a positive integer.
Then the CDS of a partition graph~$G_\lambda$ begins with~$k$
if and only if
\begin{enumerate}
\item\label{item:cdsk1}
the diagram of~$\lambda$ contains the entirety of antidiagonal $k-1$, and
\item\label{item:cdsk2}
the diagram of~$\lambda$ omits some box from antidiagonal~$k$.
\end{enumerate}
\end{proposition}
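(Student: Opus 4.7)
The plan is to use Lemma~\ref{lem:lij} together with the basic shape property that a partition's row lengths weakly decrease. Since $\alpha_0 = 0$, we have $\delta_1 = \alpha_1$, so the task reduces to showing that the maximum size of a stable set in~$G_\lambda$ equals~$k$ precisely when (\ref{item:cdsk1}) and~(\ref{item:cdsk2}) hold.

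For the ``if'' direction I would establish matching bounds. Antidiagonal $k-1$ consists of $k$ boxes that lie in pairwise distinct rows and pairwise distinct columns, so if it is wholly contained in~$\lambda$ (condition~\ref{item:cdsk1}), it forms a stable set of size~$k$, giving $\alpha_1 \ge k$. For the upper bound, invoke condition~\ref{item:cdsk2}: pick a box $(i,j)$ of antidiagonal~$k$ that is missing from~$\lambda$, so $i + j = k$. Because $\lambda$ is a Young diagram, the absence of $(i,j)$ forces $\lambda_{i+1} \le j$, and weak decrease of row lengths then implies that no $(i',j')$ with $i' \ge i$ and $j' \ge j$ is in~$\lambda$; that is, $\lij{\lambda}{i}{j} = \emptyset$. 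Lemma~\ref{lem:lij} now gives $\alpha_1 \le i + j = k$, completing this direction.

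For the ``only if'' direction I would argue by contrapositive using the same two observations. If condition~\ref{item:cdsk1} fails, some box $(i,j)$ with $i + j = k-1$ is missing from~$\lambda$; the partition-shape argument again yields $\lij{\lambda}{i}{j} = \emptyset$, so Lemma~\ref{lem:lij} gives $\alpha_1 \le k-1 < k$. If condition~\ref{item:cdsk2} fails, antidiagonal~$k$ is contained in~$\lambda$, and its $k+1$ boxes constitute a stable set, so $\alpha_1 \ge k+1 > k$. There is no real obstacle here: both directions reduce to the twin facts that antidiagonals present in~$\lambda$ are stable sets, and a box missing from~$\lambda$ collapses the corresponding corner $\lij{\lambda}{i}{j}$ to the empty set, enabling the Lemma~\ref{lem:lij} bound.
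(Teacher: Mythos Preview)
Your proof is correct and follows essentially the same approach as the paper: both directions hinge on the observation that antidiagonals present in~$\lambda$ are stable sets, together with Lemma~\ref{lem:lij} applied at a missing box~$(i,j)$ where $\lij{\lambda}{i}{j}=\varnothing$. You have in fact spelled out more carefully than the paper why a missing box forces $\lij{\lambda}{i}{j}=\varnothing$, but the argument is otherwise identical.
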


\begin{proof}
Suppose that conditions \ref{item:cdsk1} and~\ref{item:cdsk2} hold.
Condition~\ref{item:cdsk1} implies that there exists a stable set
with $k$ boxes---simply take antidiagonal $k-1$.
Let $(i,j)$ be the coordinates of a missing box in antidiagonal~$k$
(whose existence is guaranteed by condition~\ref{item:cdsk2}).
Then Lemma~\ref{lem:lij} tells us that any stable set contains
at most $k$ boxes outside $\lij{\lambda}{i}{j}$, and since all
of~$\lambda$ lies outside $\lij{\lambda}{i}{j}$, this means that
the maximum size of a stable set is~$k$.  That is, the CDS
begins with~$k$.

Conversely, suppose that the CDS begins with~$k$.  If there were
some box in antidiagonal $k-1$ missing from the diagram of~$\lambda$,
then a similar argument to the one in the previous paragraph would
imply that a stable set could have size at most $k-1$, which is a
contradiction.  So Condition~\ref{item:cdsk1} holds.
Condition~\ref{item:cdsk2} must also hold, because if the diagram
of~$\lambda$ contained the entirety of antidiagonal~$k$, then there
would be a stable set of size at least~$k+1$, again a contradiction.
\end{proof}

We use the term \emph{main antidiagonal} to refer to antidiagonal~$\delta_1-1$.
Proposition~\ref{prop:cdsk} tells us a lot about~$\delta_2$
as well.  If $\delta_1 = k$, then in the worst case,
we can color antidiagonal $k-1$ one color and
antidiagonal $k-2$ another color. Therefore, $\delta_2 \ge k-1$.
A slightly more careful argument yields the following result.

\begin{proposition}
\label{prop:catalan}
If $\delta$ is the CDS of a partition graph~$G_\lambda$
and $\delta_1 = k$, then $\delta_r \ge k-r+1$ for $1\le r\le k$.
\end{proposition}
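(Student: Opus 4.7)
The plan is to express $\alpha_r$ as the minimum of a simple functional and then analyze where its minimizers can lie. Define
\[
f_r(i,j) := r(i+j) + |\lij{\lambda}{i}{j}|.
\]
The $r$-fold generalization of Lemma~\ref{lem:lij} gives $\alpha_r \le f_r(i,j)$ for every $(i,j)$, since a union of $r$ disjoint stable sets has at most $r(i+j)$ boxes outside $\lij{\lambda}{i}{j}$ and at most $|\lij{\lambda}{i}{j}|$ boxes inside. I would prove the matching lower bound by applying a K\"onig-type min-max theorem for bipartite $r$-matchings to $B_\lambda$: the maximum union of $r$ matchings equals $\min_U [r|U| + e(V\setminus U)]$ over vertex subsets~$U$, and a standard rearrangement exploiting $\lambda_i \ge \lambda_{i+1}$ shows the minimum is attained when $U$ consists of an initial segment of rows together with an initial segment of columns. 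This yields $\alpha_r = \min_{i,j\ge 0} f_r(i,j)$.

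With this formula in hand, observe that $f_{r-1}(i,j) = f_r(i,j) - (i+j)$, so for any minimizer $(i^*,j^*)$ of $f_r$,
\[
\alpha_{r-1} \le f_{r-1}(i^*,j^*) = \alpha_r - (i^*+j^*),
\]
whence $\delta_r \ge i^* + j^*$. It therefore suffices to produce a minimizer with $i^* + j^* \ge k - r + 1$. Since $\delta_1 = k$, Proposition~\ref{prop:cdsk} forces antidiagonal~$k-1$ to lie entirely in~$\lambda$; equivalently, row~$i$ of~$\lambda$ has at least $k-i$ boxes for $0 \le i \le k-1$, so $\lambda$ contains the staircase $(k, k-1, \ldots, 1)$.

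A direct calculation gives $f_r(i+1,j) - f_r(i,j) = r - \max(0, \lambda_{i+1} - j)$. For any $(i,j)$ with $i + j \le k - r$, the staircase containment forces $\lambda_{i+1} - j \ge (k-i) - j \ge r$, so this difference is $\le 0$. Hence from any minimizer of $f_r$ with $i + j \le k - r$, moving from $(i,j)$ to $(i+1,j)$ produces another minimizer with coordinate sum increased by one; iterating yields a minimizer with $i^* + j^* \ge k - r + 1$, and the preceding paragraph then gives $\delta_r \ge k - r + 1$. The main obstacle I anticipate is rigorously justifying the formula for $\alpha_r$: the K\"onig-type duality for bipartite $r$-matchings is standard, but the rearrangement reducing a general vertex cover to initial-segment form, while elementary, requires careful bookkeeping.
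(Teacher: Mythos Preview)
Your argument is correct, but it takes a substantially different route from the paper's. The paper gives a direct greedy construction: starting from any coloring realizing~$\alpha_{r-1}$, it exhibits $k-r+1$ additional boxes (one in each of rows $0,1,\ldots,k-r$) that can receive an $r$th color, using only the pigeonhole observation that row~$i$ has at least $k-i$ boxes while at most $r-1$ of them are already colored. No matching theory is needed.

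Your approach instead invokes the min-max formula $\alpha_r = \min_{i,j} f_r(i,j)$, which is exactly Theorem~\ref{thm:corner} (proved later in the paper via the K\"onig-type duality you describe), and then argues that some minimizer of~$f_r$ satisfies $i^*+j^* \ge k-r+1$ by pushing a minimizer in the $i$-direction whenever $i+j \le k-r$. This is valid and gives a pleasant structural explanation: $\delta_r$ is at least the antidiagonal index of the ``tightest'' corner constraint for~$\alpha_r$, and that index cannot be too small because the staircase forces $f_r$ to be weakly decreasing in~$i$ throughout the region $i+j \le k-r$. The trade-off is that you are importing the full strength of Theorem~\ref{thm:corner} to prove a statement that, in the paper's development, precedes it and is used to motivate the corner-constraint picture in the first place. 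Your version would fit more naturally as a corollary of Theorem~\ref{thm:corner} than as the self-contained warm-up the paper intends here.
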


\begin{proof}
Suppose that $\alpha_{r-1}$ boxes of the diagram of~$\lambda$
have been colored using $r-1$ colors.  It suffices to show that
at least $k-r+1$ uncolored boxes can be colored using an $r$th color.
Proposition~\ref{prop:cdsk} tells us that the diagram of~$\lambda$
contains the entirety of antidiagonal $k-1$.
That is, for $0\le i\le k-1$, the diagram of~$\lambda$
contains at least $k-i$ boxes in row~$i$.
In particular, in row $k-r$, there are at least $r$ boxes,
and therefore at least one uncolored box, since only $r-1$ colors
have been used so far.
Let us color an uncolored box in row $k-r$ with color~$r$.
By the same kind of reasoning,
there are at least two uncolored boxes in row $k-r-1$,
and at least one of these can be colored with color~$r$,
because at most one of them is rendered unavailable by
the box in row $k-r$ that we colored with color~$r$.
Continuing in this way with rows $k-r-2$, $k-r-3$, etc.,
we find that we can color one box in each of the rows
$0,1,\ldots,k-r$ with color~$r$, for a total of $k-r+1$
boxes.  This is what we set out to prove.
\end{proof}

Combining Propositions \ref{prop:deWerra} and~\ref{prop:catalan},
we see that if we fix~$\delta_1 := k$, 
then there are at most five possibilities for the first three
terms of a CDS of a partition graph:
\begin{equation*}
(k,k,k), \quad (k,k,k-1), \quad (k,k,k-2), \quad
(k,k-1,k-1), \quad (k,k-1,k-2).
\end{equation*}
Similarly, there are at most $C_r$ possibilities for the first $r$
terms, where $r$ is the $r$th Catalan number.  It turns out, however,
that not all of these possibilities are realizable.
But before proving that, we need to develop a bit of general theory.

% consider the ``degenerate'' case in which $\mu$ is the empty partition.
% For $\lambda$ to include~$\mu$ at~$d$ means that there
% exists $(i,j)$ in antidiagonal~$d$ that is missing from the
% diagram of~$\lambda$.
% So the conditions in Proposition~\ref{prop:cdsk}
% can be rephrased as, ``$\lambda$ excludes the empty partition
% at $k-1$ and includes the empty partition at~$k$.''

% For a less trivial example, consider what it means for $\lambda$ to include
% the partition $(2,1)$ at~$d$. This means that there exists $(i,j)$ in antidiagonal
% $d$ such that $\lij{\lambda}{i}{j} \simeq (2,1)$.
% This is the same as saying
% that there are three ``consecutive'' boxes in antidiagonal~$d+2$
% that are missing from~$\lambda$
% (where ``consecutive'' means that they touch at a corner).

\subsection{Corner Constraints}

Lemma~\ref{lem:lij} can be used to derive what we call
\emph{corner constraints} on the CDS of a partition graph.
First, we need a definition.

\begin{definition}
\label{def:include}
Let $\lambda$ and $\mu$ be partitions,
and let $d$ be a nonnegative integer.
Then \emph{$\lambda$ includes~$\mu$ at~$d$}
if there exist nonnegative integers $i$ and~$j$ such that
\begin{enumerate}
\item $i+j = d$, and
\item $\lij{\lambda}{i}{j} \simeq \mu$ (i.e., the diagram of~$\mu$
is isomorphic to $\lij{\lambda}{i}{j}$).
\end{enumerate}
If $\lambda$ does not include~$\mu$ at~$d$, then we say that
$\lambda$ \emph{excludes} $\mu$ at~$d$.
\end{definition}

As an example,
Definition~\ref{def:include} allows us to rephrase
Proposition~\ref{prop:cdsk} as follows:
$\delta_1 = k$ if and only if $\lambda$
excludes the empty partition at $d-1$
and includes the empty partition at~$d$.

\begin{lemma}[corner constraints]
\label{lem:include}
If $\lambda$ includes $\mu$ at~$d$, then for all $r\ge 1$,
\begin{equation}
\label{eq:include}
\alpha_r(G_\lambda) \le rd + |\mu|.
\end{equation}
\end{lemma}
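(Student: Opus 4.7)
The plan is to bound $\alpha_r$ by splitting each of the $r$ witnessing stable sets into an ``inside'' and an ``outside'' part relative to the corner realizing the inclusion, and applying Lemma~\ref{lem:lij} to the outside contributions.

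First I would unpack the hypothesis: since $\lambda$ includes $\mu$ at~$d$, there exist nonnegative integers $i,j$ with $i+j = d$ and $\lij{\lambda}{i}{j} \simeq \mu$; in particular $|\lij{\lambda}{i}{j}| = |\mu|$. Now fix pairwise disjoint stable sets $S_1, \ldots, S_r$ in $G_\lambda$ whose union has cardinality $\alpha_r(G_\lambda)$. For each~$k$, partition $S_k$ into $S_k^{\mathrm{in}} := S_k \cap \lij{\lambda}{i}{j}$ and $S_k^{\mathrm{out}} := S_k \setminus \lij{\lambda}{i}{j}$.

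Lemma~\ref{lem:lij} applied to each $S_k$ gives $|S_k^{\mathrm{out}}| \le i+j = d$, and summing over $k$ yields $\sum_k |S_k^{\mathrm{out}}| \le rd$. For the inside contributions, the disjointness of the $S_k$ as subsets of $\lij{\lambda}{i}{j}$ gives $\sum_k |S_k^{\mathrm{in}}| \le |\lij{\lambda}{i}{j}| = |\mu|$. Adding these two bounds proves $\alpha_r(G_\lambda) \le rd + |\mu|$.

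The main obstacle: honestly, there isn't one. The inequality is essentially a direct application of Lemma~\ref{lem:lij} together with the trivial observation that the corner $\lij{\lambda}{i}{j}$ has only $|\mu|$ boxes; the only ``choice'' is which witnessing pair $(i,j)$ to use, and any pair furnished by the inclusion hypothesis works identically. I expect the entire argument to fit in a few lines.
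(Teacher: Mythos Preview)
Your proof is correct and follows essentially the same approach as the paper's: bound the outside contribution of each stable set by~$d$ via Lemma~\ref{lem:lij} (giving $rd$ total) and the inside contribution by $|\mu|$ since the corner has only $|\mu|$ boxes. The paper's version is just slightly terser, phrasing both bounds directly in terms of the union rather than splitting each $S_k$ explicitly.
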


\begin{proof}
By Lemma~\ref{lem:lij}, every stable set contains at most $d$
elements outside $\lij{\lambda}{i}{j}$, so the union of $r$ stable sets
contains at most $rd$ elements \emph{outside} $\lij{\lambda}{i}{j}$.
On the other hand,
since $\lij{\lambda}{i}{j} \simeq \mu$, the union of $r$ stable sets
contains at most $|\mu|$ elements \emph{inside} $\lij{\lambda}{i}{j}$.
Combining these two facts yield Inequality~\eqref{eq:include}.
\end{proof}

A remarkable consequence of matching theory is that corner constraints
actually determine the values of~$\alpha_r$, in the following sense.

\begin{theorem}
\label{thm:corner}
The value of $\alpha_r(G_\lambda)$ is the minimum value of
$rd + |\mu|$ as $i$ and~$j$ range over all nonnegative integers,
where $d := i+j$ and $\mu := \lij{\lambda}{i}{j}$.
\end{theorem}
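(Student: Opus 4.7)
The plan is to exploit the bipartite graph~$B_\lambda$ from Section~\ref{sec:graphtheory} and recast the computation of $\alpha_r(G_\lambda)$ as a max-flow/min-cut problem. As noted in the proof of Proposition~\ref{prop:dominance}, a stable set in~$G_\lambda$ is the same thing as a matching in~$B_\lambda$, so a union of $r$ edge-disjoint stable sets corresponds to a subgraph $H\subseteq B_\lambda$ with every degree at most~$r$; conversely, since $B_\lambda$ is bipartite, K\"onig's edge-coloring theorem decomposes any such~$H$ into $r$ matchings. Hence $\alpha_r(G_\lambda)$ is the maximum number of edges in a subgraph of~$B_\lambda$ whose maximum degree is at most~$r$.

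To compute this maximum, I build the standard flow network: add a source~$s$ with a capacity-$r$ arc to each row-vertex, a sink~$t$ with a capacity-$r$ arc from each column-vertex, and orient each edge of~$B_\lambda$ from its row- to its column-endpoint with capacity~$1$. Integer $s$-$t$ flows then correspond bijectively to degree-$r$-bounded subgraphs~$H$ of~$B_\lambda$, with flow value $|E(H)|$, so $\alpha_r(G_\lambda)$ equals the max flow, which by max-flow/min-cut equals the min cut.

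A cut is specified by the set~$A'$ of row-vertices on the $t$-side together with the set~$B'$ of column-vertices on the $s$-side, and a routine tally gives cut capacity
\begin{equation*}
r\bigl(|A'|+|B'|\bigr) + \bigl|\{(a,b)\in\lambda : a\notin A',\ b\notin B'\}\bigr|.
\end{equation*}
Taking $A'=\{0,1,\ldots,i-1\}$ and $B'=\{0,1,\ldots,j-1\}$ yields a cut of capacity $r(i+j)+|\lij{\lambda}{i}{j}|$, recovering the upper bound of Lemma~\ref{lem:include}.

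The remaining point---and the only place where the partition structure of~$\lambda$ really enters---is to show that among \emph{all} pairs $(A',B')$ the minimum capacity is attained at such an initial-segment pair. Fixing $|A'|=i$ and an arbitrary~$B'$, the map $a\mapsto|\{b\notin B':(a,b)\in\lambda\}|$ is weakly decreasing in~$a$, since $\lambda$ itself is, so the sum of its values over $a\notin A'$ is minimized by taking $A'=\{0,\ldots,i-1\}$; a symmetric argument using the conjugate partition~$\lambda'$ then forces $B'=\{0,\ldots,j-1\}$. I expect this exchange argument to be the most delicate step, but it is quickly verified once one observes that both $\lambda$ and~$\lambda'$ are weakly decreasing. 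Combining these pieces yields $\alpha_r(G_\lambda)=\min_{i,j\ge 0}\bigl(r(i+j)+|\lij{\lambda}{i}{j}|\bigr)$, as claimed.
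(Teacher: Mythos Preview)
Your proof is correct and follows essentially the same approach as the paper: both identify $\alpha_r(G_\lambda)$ with the maximum size of a union of $r$ matchings in~$B_\lambda$, invoke the min-max formula $\min_X\bigl(r|X|+|E[V\setminus X]|\bigr)$, and then use the partition shape to reduce to initial-segment choices of rows and columns. The only difference is presentational---the paper cites the min-max formula from Schrijver, whereas you derive it from scratch via max-flow/min-cut and K\"onig's edge-coloring theorem---so your version is somewhat more self-contained but not a different route.
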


\begin{proof}
It is a standard fact from matching theory~\cite[Volume~A, Corollary 21.4b]{S}
that if $B$ is a bipartite graph with vertex set~$V$,
then the maximum size of the union
of $r$ matchings is the minimum value of
\begin{equation*}
r\cdot |X| + |E[V \backslash X]|
\end{equation*}
as $X$ ranges over all subsets of~$V$, and $|E[V \backslash X]|$ is
the number of edges, neither of whose endvertices lie in~$X$.
In the context of~$B_\lambda$, this means that we should consider
all subsets of rows and columns---let $i$ denote the number of rows
and let $j$ denote the number of columns---and minimize over
$r\cdot (i+j)$ plus the number of boxes
not in any of the chosen rows or columns.
The key observation is that since $\lambda$ is a partition,
we can safely restrict attention to the case in which the chosen rows
and columns are the \emph{first} $i$ rows and the \emph{first} $j$ columns,
because choosing any other $i$ rows or $j$ columns can only
increase the value of $|E[V\backslash X]|$
(while keeping the value of $|X| = i + j$ the same), thereby yielding
a weaker upper bound on the value of~$\alpha_r$.
But if we choose the first $i$ rows and
the first $j$ columns, then the inequality we obtain is just a
corner constraint.
\end{proof}

\subsection{Combinatorial Characterization of the CDS}

\relax From Theorem~\ref{thm:corner}, we can infer an algorithm for
computing the CDS of a partition graph~$G_\lambda$.
At first glance, it seems that we must consider infinitely many
non\-negative integers $i$ and~$j$,
but it turns out that we need only consider finitely many pairs $(i,j)$.
To see this, first find $(i_0,j_0)$ not in the diagram of~$\lambda$
such that $i_0 + j_0$ is as small as possible;
by Proposition~\ref{prop:cdsk}, $\delta_1 = i_0+j_0$,
so the associated corner constraints are $\alpha_r \le r \delta_1$.
No other corner constraints with $\mu = \varnothing$
need to be considered, because they all involve larger
values of~$d$ and are therefore weaker.
So to compute the CDS of~$G_\lambda$,
the only other pairs $(i,j)$ we need to consider are
those that are inside the diagram of~$\lambda$.

The above algorithm is nice, but we can say more.

\begin{definition}
If $\delta$ is the CDS of~$\lambda$, then the
\emph{normalized CDS} $\bar\delta = (\bar\delta_1, \bar\delta_2, \ldots)$
is defined by $\bar\delta_i := \delta_1 - \delta_i$ for all~$i$.
\end{definition}

Knowing the CDS is equivalent to knowing the normalized CDS
plus the value of~$\delta_1$.
This may seem like a trivial change of variables,
but the advantage of the normalized CDS
is that for a fixed~$r$,
there are only finitely many possible values that the sequence
$(\bar\delta_1, \bar\delta_2, \ldots, \bar\delta_r)$ can take,
and as we now show, we can give a combinatorial characterization
of each such sequence.

We begin by rephrasing the corner constraints in terms of the normalized CDS.
Since $\alpha_r = \delta_1 + \cdots + \delta_r$,
we can subtract both sides of
the corner constraint $\alpha_r \le rd + |\mu|$ from $r\delta_1$
to obtain the equivalent inequality
\begin{equation}
\label{eq:normalizedCDS}
\bar\delta_1 + \bar\delta_2 + \cdots + \bar\delta_r
  \ge r(\delta_1 - d) - |\mu|.
\end{equation}
We can now rephrase the algorithm in the first paragraph of this subsection
in terms of the normalized CDS.
First we compute $\delta_1$, and we also note that $\bar\delta_1 = 0$ always.
We can then compute $\bar\delta_r$ by induction;
if we have already computed $\bar\delta_1, \ldots, \bar\delta_{r-1}$,
then Theorem~\ref{thm:corner} tells us that
$\bar\delta_r$ is the smallest nonnegative integer consistent with
all constraints of the form of Inequality~\eqref{eq:normalizedCDS}.
Notice that for this calculation, we just need to know the ordered pairs
$(\lij{\lambda}{i}{j}, \delta_1-i-j)$ as $(i,j)$ ranges over the diagram.
This observation is important enough that we introduce the following
definition.

\begin{definition}
\label{def:inclusionset}
The \emph{inclusion set} of a partition~$\lambda$ is the set
of all pairs $(\mu,\delta_1 - d)$ such that $\lambda$ includes $\mu$ at~$d$
and $\mu \ne \varnothing$.
\end{definition}

If we forget $\lambda$ itself but remember the inclusion set of~$\lambda$
and the value of $\delta_1$, then we can recover the normalized CDS.
In fact, to compute the normalized CDS, we do not even need the
entire inclusion set.
Any $(\mu,\delta_1-d)$ with $\delta_1 - d\le 0$ can be safely omitted from
the inclusion set, because then Inequality~\eqref{eq:normalizedCDS}
is already implied by the nonnegativity of the~$\bar\delta_i$.

Now let us fix~$r$ and think about how to compute
the first $r$ terms of the normalized CDS.
We can omit any $(\mu,\delta_1-d)$ with $|\mu| \ge r(\delta_1-d)$, because 
again Inequality~\eqref{eq:normalizedCDS}
is then implied by the nonnegativity of the~$\bar\delta_i$.
We also have the following result.

\begin{lemma}
\label{lem:redundant}
If we delete every $(\mu,\delta_1-d)$ with $\delta_1 - d\ge r$ from
the inclusion set of~$\lambda$, then the remaining elements
still suffice to recover $\bar\delta_1, \ldots, \bar\delta_r$.
\end{lemma}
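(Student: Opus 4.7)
The plan is to show that every corner constraint~\eqref{eq:normalizedCDS} arising from a pair $(\mu, c) = (\lij{\lambda}{i_0}{j_0}, \delta_1 - d)$ in the inclusion set with $c \ge r$ is implied, for $1 \le s \le r$, by a constraint arising from another pair $(\mu', c')$ still present after the deletion. Once this is established, the inductive algorithm produces the same values of $\bar\delta_1, \ldots, \bar\delta_r$ whether we feed it the full inclusion set or the reduced one.

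The first step is to extract from the proof of Proposition~\ref{prop:catalan} the fact that row~$i$ of~$\lambda$ contains at least $\delta_1 - i$ boxes for $0 \le i \le \delta_1 - 1$. Writing $\mu_s$ for the $s$th part of~$\mu$, this translates to the size estimate $\mu_s \ge c - s + 1$ for $s = 1, \ldots, c$, which drives the rest of the argument.

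The main step is to take $k := c - r + 1$, $\mu' := \lij{\lambda}{i_0 + k}{j_0}$, and $c' := c - k = r - 1$, and verify three things. First, $\mu' \ne \varnothing$ (so that $(\mu', c')$ really lies in the inclusion set), which follows from $\mu_{k+1} \ge c - k = r - 1$ as long as $r \ge 2$. Second, $c' < r$, which is immediate. Third, for each $s \le r$, the constraint from $(\mu', c')$ is at least as strong as the constraint from $(\mu, c)$; equivalently, $|\mu| - |\mu'| \ge s k$. Since $|\mu| - |\mu'|$ equals the sum of the top $k$ rows of $\mu$, the size estimate bounds it below by $kc - k(k-1)/2$, and a short computation shows this quantity is at least $rk$ precisely when $c \ge r$.

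The edge case $r = 1$ must be handled separately, because then $\mu'$ could be empty; but since $\bar\delta_1 = 0$ by convention, every deleted constraint $\bar\delta_1 \ge c - |\mu|$ reduces to checking $|\mu| \ge c$, and $\mu_1 \ge c$ already gives this. The main potential difficulty is bookkeeping: the estimate $\mu_s \ge c - s + 1$ has to be just tight enough, in two places, to force both $\mu' \ne \varnothing$ and $|\mu| - |\mu'| \ge rk$, and it turns out that both conditions are equivalent to $c \ge r$, which is exactly our hypothesis.
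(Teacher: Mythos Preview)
Your proof is correct and follows essentially the same approach as the paper: both arguments replace a deleted pair $(\mu,c)$ by one obtained after stripping top rows from~$\mu$, using the fact that those rows have length at least~$r$ because $\lambda$ contains the entire main antidiagonal. The only difference is that the paper removes a single row (passing from $c$ to $c-1$) and relies on an implicit iteration, whereas you remove $k=c-r+1$ rows in one shot to land directly at $c'=r-1$; this costs you the arithmetic check $kc-k(k-1)/2\ge rk$ in place of the paper's simpler $\mu_1\ge r$, but avoids the need to iterate.
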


\begin{proof}
We may assume that $r\ge 2$ since $\bar\delta_1 = 0$ always.
If $(\mu,\delta_1-d)$ is in the inclusion set, then there exists $(i,j)$
such that $i+j=d$ and $\lij{\lambda}{i}{j} \simeq \mu$.
Proposition~\ref{prop:cdsk} tells us that
the diagram of~$\lambda$ contains all of antidiagonal $\delta_1 - 1$.
By assumption, $i+j = d\le \delta_1 - r$, so
$\lij{\lambda}{i}{j}$ must contain all the boxes in row~$i$ extending from
$(i,j)$ out to antidiagonal $\delta_1 - 1$, namely the boxes
$(i,j), (i,j+1), \ldots, (i,j+r-1)$.
Therefore, the first part of~$\mu$ is at least~$r$.

Let $\nu$ be the partition obtained from $\mu$
by deleting the first part of~$\mu$, so that
$\nu \simeq \lij{\lambda}{i+1}{j}$. 
Since $r\ge 2$ by assumption, $\nu \ne\varnothing$.
As we just noted, the first part of~$\mu$ is at least~$r$, so
\begin{equation}
\label{eq:nu}
|\nu| \le |\mu| - r.
\end{equation}
Equation~\eqref{eq:nu}, together with
the fact that $\lambda$ includes $\nu$ at $d+1$, implies that
\begin{equation*}
\bar\delta_1 + \cdots + \bar\delta_r \ge r(\delta_1 - d -1 ) - |\nu|
\ge r(\delta_1 - d) - |\mu|,
\end{equation*}
which is the constraint associated with $(\mu,\delta_1-d)$.
So omitting $(\mu,\delta_1-d)$ from the inclusion set does no harm,
since the inclusion of $\nu$ at $d+1$
implies a constraint that is at least as strong.
\end{proof}

In summary, to compute $\bar\delta_1, \ldots, \bar\delta_r$,
the only $(\mu,\delta_1-d)$ in the inclusion set
that we need are those satisfying $0 < \delta_1-d < r$,
and we need only consider $\mu$ such that
\begin{equation*}
|\mu| < r(\delta_1 - d) < r^2.
\end{equation*}
These bounds have an important consequence:
If we let $S_r$ be the set  of all pairs $(\mu,s)$
where $0 < s < r$ and $|\mu| < r^2$, then $S_r$ is finite,
and the first $r$ terms of the normalized CDS of any~$\lambda$,
\emph{no matter how large},
can be computed from a restriction of
the inclusion set of~$\lambda$ to a subset of~$S_r$.
In particular, suppose we want to characterize
all the partition graphs whose normalized CDS begins
with a specific sequence of $r$ numbers.
Even though this is an infinite set of partition graphs,
there are only finitely many possibilities for how the
inclusion set of~$\lambda$ can restrict to a subset of~$S_r$,
and this restriction determines the first $r$ terms
of the normalized CDS.
Therefore, we can prove results such as
Propositions \ref{prop:characterization2}
and~\ref{prop:characterization3} below
just by carrying out a finite computation
(exhausting over all possible subsets of~$S_r$).
Since $\bar\delta_1 = 0$ always,
we usually suppress mention of~$\bar\delta_1$.

\begin{proposition}
\label{prop:characterization2}
If $\lambda$ is a partition, then
\begin{enumerate}[(a)]
\item $\bar\delta_2 = 1$ if and only if
$\lambda$ includes $(1)$ at $\delta_1 - 1$, and
\item $\bar\delta_2 = 0$ if and only if
$\lambda$ excludes $(1)$ at $\delta_1 - 1$.
\end{enumerate}
\end{proposition}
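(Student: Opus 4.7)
The plan is to apply the algorithmic characterization of the normalized CDS that was just developed: $\bar\delta_2$ is the smallest nonnegative integer satisfying every constraint of the form \eqref{eq:normalizedCDS} coming from the inclusion set of~$\lambda$. So the entire proof reduces to enumerating which pairs $(\mu,\delta_1-d)$ can contribute a nontrivial constraint when $r=2$, and checking what each such constraint forces.

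First I would record that $\bar\delta_2 \in \{0,1\}$: Proposition~\ref{prop:catalan} gives $\delta_2 \ge \delta_1 - 1$, hence $\bar\delta_2 \le 1$, and $\bar\delta_2 \ge 0$ by definition. So the only question is which of the two values occurs, and the two cases (a) and (b) of the proposition are mutually exclusive and exhaustive; it suffices to prove the forward implications.

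Next I would apply the bounds from the discussion immediately preceding the proposition. For $r=2$, only $(\mu,s)$ with $0<s<2$ and $|\mu|<2s$ can contribute, which forces $s=1$ (that is, $d=\delta_1-1$) and $|\mu|=1$. Since $\mu\neq\varnothing$, the unique candidate is $\mu=(1)$. So the pair $((1),1)$ is the one and only element of the inclusion set that can produce a constraint beyond $\bar\delta_2 \ge 0$. In case~(a), $\lambda$ includes $(1)$ at $\delta_1-1$, so $((1),1)$ belongs to the inclusion set, and Inequality~\eqref{eq:normalizedCDS} with $r=2$, $\delta_1-d=1$, and $|\mu|=1$ reads $\bar\delta_2 \ge 2\cdot 1 - 1 = 1$; combined with the upper bound this gives $\bar\delta_2=1$. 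In case~(b), no relevant pair is in the inclusion set, so no constraint forces $\bar\delta_2>0$ and the algorithm returns $\bar\delta_2=0$.

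Since the general framework has already been proved, there is really no obstacle to overcome; the main thing to be careful about is the bookkeeping that rules out every other $(\mu,s)\in S_2$ as a source of additional constraints, which is precisely what the bounds $0<s<r$ and $|\mu|<rs$ accomplish. The same strategy will drive Proposition~\ref{prop:characterization3} and presumably the higher analogues, with the size of the case analysis growing because more pairs $(\mu,s)$ become admissible.
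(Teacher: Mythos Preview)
Your proposal is correct and is exactly the finite computation the paper has in mind: the paper does not write out a separate proof of Proposition~\ref{prop:characterization2}, but says explicitly that it follows ``just by carrying out a finite computation (exhausting over all possible subsets of~$S_r$),'' and you have carried out precisely that computation for $r=2$.
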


\begin{proposition}
\label{prop:characterization3}
If $\lambda$ is a partition, then
\begin{enumerate}[(a)]
\item $(\bar\delta_2, \bar\delta_3) = (1,2)$ if and only if
$\lambda$ includes~$(1)$ at~$\delta_1-1$ and $(2,1)$ at~$\delta_1-2$;
\item $(\bar\delta_2, \bar\delta_3) = (1,1)$ if and only if
$\lambda$ includes~$(1)$ at~$\delta_1-1$ and excludes $(2,1)$ at~$\delta_1-2$;
\item $(\bar\delta_2, \bar\delta_3) = (0,2)$ if and only if
$\lambda$ excludes~$(1)$ at~$\delta_1-1$ and includes $(2,2)$ at~$\delta_1-2$;
\item $(\bar\delta_2, \bar\delta_3) = (0,1)$ if and only if
$\lambda$ excludes~$(1)$ at~$\delta_1-1$ and $(2,2)$ at~$\delta_1-2$
and includes $(2)$ or $(1,1)$ at~$\delta_1-1$;
\item $(\bar\delta_2, \bar\delta_3) = (0,0)$ if and only if
$\lambda$ excludes~$(1)$ and $(2)$ and $(1,1)$ at~$\delta_1-1$
and $(2,2)$ at~$\delta_1-2$.
\end{enumerate}
\end{proposition}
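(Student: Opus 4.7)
The plan is to apply the algorithm of Section~\ref{sec:CDSpartition} based on Inequality~\eqref{eq:normalizedCDS} and Lemma~\ref{lem:redundant}. To determine $(\bar\delta_2, \bar\delta_3)$ one only needs those pairs $(\mu, s)$ in the inclusion set with $s \in \{1, 2\}$ and $|\mu| < 3s$; then $\bar\delta_3$ is the least nonnegative integer satisfying $\bar\delta_2 + \bar\delta_3 \ge 3s - |\mu|$ for every such pair, capped at $\bar\delta_3 \le 2$ by Proposition~\ref{prop:catalan}. Since $\bar\delta_2$ is already characterized by Proposition~\ref{prop:characterization2}, I will split on whether $\bar\delta_2 = 1$ or $\bar\delta_2 = 0$.

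The central geometric input is that for any $(i, j)$ with $i + j = \delta_1 - 2$, the three boxes $(i, j), (i, j+1), (i+1, j)$ lie on antidiagonals $\delta_1 - 2$ and $\delta_1 - 1$, both of which are contained in $\lambda$ by Proposition~\ref{prop:cdsk}. Hence every $\mu$ arising at $s = 2$ satisfies $|\mu| \ge 3$ and contains the staircase $(2, 1)$, while at $s = 1$ every $\mu$ contains $(1)$.

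When $\bar\delta_2 = 1$, the only partition of size $\le 3$ containing $(2, 1)$ is $(2, 1)$ itself, so its inclusion at $\delta_1 - 2$ is precisely what forces $\bar\delta_3 = 2$ rather than $\bar\delta_3 = 1$; this handles (a) and (b). When $\bar\delta_2 = 0$, the exclusion of $(1)$ at $\delta_1 - 1$ means every box on antidiagonal $\delta_1 - 1$ must have a neighbor to the right or below in $\lambda$. A direct check shows that for each of the shapes $(2, 1), (3, 1), (2, 1, 1)$, one of the shoulder boxes $(i, j+1), (i+1, j)$ has neither neighbor present, so these three shapes are automatically excluded at $\delta_1 - 2$ under $\bar\delta_2 = 0$. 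Among size-$\le 4$ shapes containing $(2, 1)$ only $(2, 2)$ survives, and its inclusion forces $\bar\delta_3 = 2$, yielding case (c).

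The main obstacle is ruling out the five size-$5$ partitions $(4, 1), (3, 2), (3, 1, 1), (2, 2, 1), (2, 1, 1, 1)$ containing $(2, 1)$ as sources of new constraints in cases (d) and (e). For each I will examine the two shoulders of a purported corner: shapes $(4, 1)$ and $(2, 1, 1, 1)$ force $\lambda$ to include $(1)$ at $\delta_1 - 1$, contradicting $\bar\delta_2 = 0$; shapes $(3, 2)$, $(3, 1, 1)$, and $(2, 2, 1)$ force $\lambda$ to include $(2)$ or $(1, 1)$ at $\delta_1 - 1$, which is already tracked by the disjunction in case (d). Hence no size-$5$ constraint is ever independent of the ones recorded in the proposition, completing cases (d) and (e).
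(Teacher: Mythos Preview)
Your argument is correct and follows exactly the route the paper indicates: the paper does not give a written proof of this proposition, but simply observes that it is a finite computation over the relevant subset of~$S_3$ (pairs $(\mu,s)$ with $0<s<3$ and $|\mu|<3s$) using Inequality~\eqref{eq:normalizedCDS} and Lemma~\ref{lem:redundant}. You have carried that computation out explicitly, and the geometric reductions you use---that every corner at $s=2$ contains the staircase $(2,1)$, and that under $\bar\delta_2=0$ each shoulder box on the main antidiagonal must have a neighbor to the right or below---are exactly the observations one needs to collapse the case analysis to the five conditions listed.
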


We have computed similar characterizations
of $(\bar\delta_2, \ldots, \bar\delta_r)$
up to $r=7$, but the corresponding Propositions become increasingly
long and complicated, so we have chosen not to state them explicitly.
But there is one remark we should make.
As we noted earlier, there are at most Catalan-many possible
values for the first $r$ terms of the normalized CDS of a partition.
The first surprise is that for $r=4$, there are only 13, and not~14,
possibilities; it turns out that for no partition graph 
do we have $(\bar\delta_2,\bar\delta_3,\bar\delta_4) = (0,2,3)$.
That is not all; for example, 
$(\bar\delta_2,\bar\delta_3,\bar\delta_4,\bar\delta_5,\bar\delta_6)$
cannot assume any of the following values:
$(0,0,0,2,5)$, $(0,0,0,3,5)$, $(0,0,0,4,5)$, $(0,0,1,3,5)$,
$(0,0,1,4,5)$, $(0,0,2,3,5)$, $(0,0,2,4,4)$, $(0,0,2,4,5)$,
$(0,0,3,3,4)$, $(0,0,3,3,5)$, $(0,0,3,4,4)$, $(0,0,3,4,5)$,
$(0,1,1,4,5)$, $(0,1,2,4,5)$, $(0,1,3,3,5)$, $(0,1,3,4,4)$,
$(0,1,3,4,5)$, $(0,2,2,4,5)$, $(1,1,3,4,5)$,
as well as of course anything starting with $(0,2,3)$.
The number of possible 
values for the first $r$ terms of the normalized CDS of a partition
is an integer sequence that begins
\begin{equation*}
1, 2, 5, 13, 37, 108, 334, \ldots
\end{equation*}
The closest match to this sequence in the Online Encyclopedia of
Integer Sequences (\texttt{oeis.org}) as of this writing is A036249,
the number of rooted trees of nonempty sets with $n$ points, which begins
\begin{equation*}
1, 2, 5, 13, 37, 108, 332, \ldots
\end{equation*}

\section{Partial CDS-Colorability}

We mentioned in the Introduction that we have been able to prove
the existence of a coloring that matches the first four terms
of the CDS. We give the proof in this section.

\begin{definition}
\label{def:Ln}
Let $\LL_n$ denote the statement that
for every partition graph~$G$, there exists a coloring~$\kappa$
such that the first $n$ parts of $\shape(\kappa)$ coincide
with the first $n$ terms of the CDS of~$G$.
\end{definition}

The scrupulous reader may worry about what Definition~\ref{def:Ln}
means if the length of $\shape(\kappa)$ or
of the CDS is less than~$n$.
For the purposes of Definition~\ref{def:Ln},
in such cases we simply ``pad out'' the sequences with trailing zeros.
So in particular, if $\LL_n$ is true for all~$n$,
then the Latin Tableau Conjecture is true.

The statement~$\LL_1$ becomes obvious as soon as one
unwinds the definitions.  The statements $\LL_2$ and~$\LL_3$
are not too difficult either, as we now show.

\begin{theorem}
\label{thm:L2L3}
$\LL_2$ and $\LL_3$ are true.
\end{theorem}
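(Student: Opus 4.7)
The plan is to work in the bipartite graph $B_\lambda$, where a coloring of $G_\lambda$ whose first $n$ classes have prescribed sizes corresponds to $n$ disjoint matchings in $B_\lambda$ of those sizes; thus $\LL_n$ amounts to producing $n$ disjoint matchings of sizes $\delta_1,\ldots,\delta_n$.

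For $\LL_2$, pick any pair of disjoint matchings $M_1,M_2$ with $|M_1 \cup M_2| = \alpha_2$, and let $s_1 \ge s_2$ be their sizes. Then $s_1 \le \delta_1$ by K\"onig's theorem, and $s_1 + s_2 = \delta_1 + \delta_2$; combining with Proposition~\ref{prop:catalan} (which gives $\delta_2 \ge \delta_1 - 1$) pins down $(s_1,s_2) = (\delta_1, \delta_2)$.

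For $\LL_3$, take any three disjoint matchings of total $\alpha_3$, with sorted sizes $s_1 \ge s_2 \ge s_3$. The bounds $s_1 \le \delta_1$ and $s_1 + s_2 \le \alpha_2$ give $s_3 \ge \alpha_3 - \alpha_2 = \delta_3$, and Proposition~\ref{prop:catalan} supplies $\delta_3 \ge \delta_1 - 2$. Combined with the five cases of Proposition~\ref{prop:characterization3}, in three of them---namely $(\delta_1,\delta_2,\delta_3) \in \{(k,k,k),(k,k,k-1),(k,k-1,k-1)\}$---the arithmetic forces the sorted sizes to equal $(\delta_1,\delta_2,\delta_3)$ immediately.

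The two hard cases are (a) $\delta=(k,k-1,k-2)$ and (c) $\delta=(k,k,k-2)$, where the sorted sizes could alternatively come out as $(k-1,k-1,k-1)$ or $(k,k-1,k-1)$. Case (a) yields to a direct construction: by Proposition~\ref{prop:cdsk} the antidiagonals $\delta_1 - 1$, $\delta_1 - 2$, and $\delta_1 - 3$ all lie entirely in $\lambda$, so they furnish three disjoint stable sets of sizes $k,k-1,k-2$. Case (c) is the principal obstacle: starting from three matchings of sizes $(k,k-1,k-1)$, we have $|M_1 \cup M_2| = 2k-1 < 2k = \alpha_2$, so $M_1 \cup M_2$ admits a 2-matching-augmenting path. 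Since $\alpha_3$ is attained, every such path must share at least one ``new'' edge with $M_3$ (else augmenting would push the union above $\alpha_3$); if \emph{exactly} one new edge lies in $M_3$, the resulting swap turns $M_1 \cup M_2$ into a maximum 2-matching decomposing as $(k,k)$ while $|M_3|$ drops to $k-2$. The hard part is exhibiting such a single-intersection augmenting path in general; here I would exploit the distinguished $2\times 2$ block at antidiagonal $\delta_1 - 2$ guaranteed by Proposition~\ref{prop:characterization3}(c) to construct the path explicitly.
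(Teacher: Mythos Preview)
Your treatment of $\LL_2$ and of the three ``balanced'' $\LL_3$ cases $(k,k,k)$, $(k,k,k-1)$, $(k,k-1,k-1)$ matches the paper's pigeonhole argument exactly, and your antidiagonal construction for $(k,k-1,k-2)$ is a valid (slightly different) route to that case. The genuine gap is case~(c), $(\delta_1,\delta_2,\delta_3)=(k,k,k-2)$: you correctly observe that any $2$-matching augmenting path for $M_1\cup M_2$ must use at least one edge of~$M_3$, but you never establish the existence of a path using \emph{exactly} one such edge, and you say as much (``the hard part is exhibiting such a single-intersection augmenting path''). The appeal to the $2\times 2$ block from Proposition~\ref{prop:characterization3}(c) is only a hope, not an argument; without it the proof is incomplete.

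The paper avoids this difficulty entirely by treating the two $\delta_3=k-2$ cases uniformly and constructively: having proved~$\LL_2$, take any two disjoint stable sets of sizes $\delta_1,\delta_2$, and then build a third disjoint stable set of size $k-2$ by the greedy row-by-row procedure of Proposition~\ref{prop:catalan}. Since the diagram contains all of antidiagonal $k-1$, row $k-3$ has at least $3$ boxes (hence at least one uncolored after two colors), row $k-4$ has at least $4$ (hence at least two uncolored, at most one blocked by the previous choice), and so on up to row~$0$; this yields $k-2$ boxes in distinct rows and columns, disjoint from the first two sets. This argument is indifferent to whether $\delta_2=k$ or $\delta_2=k-1$, so it disposes of case~(c) with no augmenting-path analysis at all. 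You already had all the ingredients---you used exactly this greedy idea implicitly via Proposition~\ref{prop:catalan} in your antidiagonal construction for case~(a)---but you did not notice that it applies to case~(c) once $\LL_2$ supplies the first two stable sets.
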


\begin{proof}
We know from Propositions \ref{prop:deWerra} and~\ref{prop:catalan}
that the only possibilities for $\delta_2$ are $\delta_2 = \delta_1$
and $\delta_2 = \delta_1 - 1$.
Suppose first that $\delta_2 = \delta_1$.
That means that $\alpha_2 = 2\delta_1$, which means that
there exists a disjoint union of two stable sets whose cardinality
is $2\delta_1$. But by definition of~$\delta_1$, each stable
set has cardinality at most~$\delta_1$. So each of the two
stable sets has cardinality $\delta_1$, and we have exhibited
a coloring with the desired properties.
Similarly, if $\delta_2 = \delta_1 - 1$, then there
must exist a union of two disjoint stable sets whose cardinality
is $2\delta_1 - 1$. But the only way this can be realized
is if one stable set has size $\delta_1$ and the other
has size $\delta_1 - 1$. The proof of~$\LL_2$ is complete.

There are five possibilities for the first three terms of the CDS.
A similar argument to the one in the previous paragraph handles
the cases $\delta_1 = \delta_2 = \delta_3$ and
$\delta_1 = \delta_2$, $\delta_3 = \delta_1 - 1$.
If $\delta_3 = \delta_2 = \delta_1 - 1$, then there exists
a union of three disjoint stable sets whose cardinality is $3\delta_1 - 2$,
but because $\alpha_2 = \delta_1 + \delta_2 = 2\delta_1 - 1$,
at most one of these stable sets can have cardinality~$\delta_1$.
So the only possibility is that we have one stable set of
size~$\delta_1$ and two stable sets of size $\delta_1-1$,
which is what we need.

In the remaining two cases, $\delta_3 = \delta_1 - 2$.
Since $\LL_2$ is true, we know that there exist two stable sets
with cardinalities $\delta_1$ and~$\delta_2$ respectively.
We can now establish the existence of a third stable set,
disjoint from the first two and having cardinality $\delta_1-2$,
via the same kind of argument that we used in the proof of
Proposition~\ref{prop:catalan}.
Namely, the diagram of~$\lambda$ contains the entirety of
antidiagonal $\delta_1 - 1$,
and so there are at least $3$ boxes in row $\delta_1 - 3$,
one of which we can use for our third stable set.
We can work our way up the diagram of~$\lambda$
one row at a time, adding one box to our stable set
in each row.  This completes the proof of~$\LL_3$.
\end{proof}

The proof of $\LL_4$ is of course more involved.
We can handle all but five cases using ideas similar 
to those in the proof of Theorem~\ref{thm:L2L3}.
For convenience, we state this partial result as
a separate Proposition.

\begin{proposition}
\label{prop:L4}
$\LL_4$ is true except possibly when
$(\bar\delta_2,\bar\delta_3,\bar\delta_4)$
equals $(0,0,2)$ or $(0,1,2)$ or $(0,2,2)$ or
$(1,1,2)$ or $(1,2,2)$.
\end{proposition}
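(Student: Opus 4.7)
The plan is to go through each of the $13$ possible values of $(\bar\delta_2, \bar\delta_3, \bar\delta_4)$ --- the weakly increasing sequences with $\bar\delta_r \le r - 1$, minus the already-excluded triple $(0,2,3)$ --- and to establish $\LL_4$ directly in each of the $8$ cases where $\bar\delta_4 \ne 2$. Two tools will be enough: a ``forced shape'' argument in the spirit of the proof of Theorem~\ref{thm:L2L3}, and the row-by-row greedy extension used to prove Proposition~\ref{prop:catalan}.

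For the four cases with $\bar\delta_4 \in \{0, 1\}$, namely $(0,0,0)$, $(0,0,1)$, $(0,1,1)$, and $(1,1,1)$, the plan is to pick any union of four pairwise-disjoint stable sets realizing $\alpha_4$ and sort their sizes as $s_1 \ge s_2 \ge s_3 \ge s_4$. The constraints $s_i \le \delta_1$, $s_1 + \cdots + s_i \le \alpha_i$ for $i < 4$, and $s_1+s_2+s_3+s_4 = \alpha_4$ should, via a short numerical check carried out separately for each of the four CDSs, force $(s_1, s_2, s_3, s_4) = (\delta_1, \delta_2, \delta_3, \delta_4)$. One then extends the four stable sets to a full coloring by making every remaining box its own singleton color class; since these singletons have size $1 \le \delta_4$ whenever $\delta_4 \ge 1$, they cannot displace the four forced parts of $\shape(\kappa)$. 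The degenerate case $\delta_4 = 0$ collapses to $\LL_3$.

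For the four cases with $\bar\delta_4 = 3$, namely $(0,0,3)$, $(0,1,3)$, $(1,1,3)$, and $(1,2,3)$, the plan is to start from an $\LL_3$-coloring (supplied by Theorem~\ref{thm:L2L3}) and greedily append a fourth stable set of size $\delta_4 = \delta_1 - 3$, mimicking the proof of Proposition~\ref{prop:catalan}. By Proposition~\ref{prop:cdsk}, row $i$ of the diagram contains at least $\delta_1 - i$ boxes. Starting at row $\delta_1 - 4$ (which has at least $4$ boxes, of which at most $3$ are occupied by the three existing classes) and working upward, an induction on the row index shows that in row $\delta_1 - 4 - t$ there are at least $4 + t$ boxes, at most $3$ used by existing classes and at most $t$ blocked by columns of previously-chosen boxes, so at least one remains available. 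This builds the fourth stable set of size $\delta_1 - 3$ disjoint from the first three.

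The main obstacle --- and the reason the statement of the Proposition stops where it does --- is that neither tool extends to $\bar\delta_4 = 2$. The forced-shape argument breaks down because each of the five CDSs in question admits at least two distinct multisets of four stable-set sizes summing to $\alpha_4$ (for example, $(\delta_1, \delta_1, \delta_1 - 1, \delta_1 - 1)$ is an admissible competitor to $(\delta_1, \delta_1, \delta_1, \delta_1 - 2)$ for the CDS $(0,0,2)$), so an arbitrary $\alpha_4$-achieving quadruple need not have the desired shape. Meanwhile, the row-by-row greedy only constructs a fourth class of size $\delta_1 - 3$, which falls one box short of the required $\delta_1 - 2$. Closing these remaining five cases will therefore demand a genuinely new combinatorial input, which is the subject of the rest of the paper.
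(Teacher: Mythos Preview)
Your proposal is correct and follows essentially the same approach as the paper's own proof: a forced-shape argument (sizes of an $\alpha_4$-achieving quadruple are pinned down by the constraints $s_i\le\delta_1$ and $s_1+\cdots+s_i\le\alpha_i$) for the four cases with $\bar\delta_4\in\{0,1\}$, and the row-by-row greedy extension starting at row $\delta_1-4$ for the four cases with $\bar\delta_4=3$. Your write-up is somewhat more explicit than the paper's (you spell out the extension to a full coloring via singletons and handle the degenerate $\delta_4=0$ subcase), and you add a helpful closing paragraph explaining why both tools fail when $\bar\delta_4=2$; one minor imprecision is that an ``$\LL_3$-coloring'' is a full coloring of the diagram, whereas what you actually use is just its three largest color classes as disjoint stable sets, but the intent is clear and matches the paper.
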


\begin{proof}
Let us first dispense with the case $\bar\delta_4 = 3$.
Theorem~\ref{thm:L2L3} tells us that $\LL_3$ is true,
so there exist three disjoint stable sets with
cardinalities $\delta_1, \delta_2, \delta_3$ respectively.
The diagram of~$\lambda$ contains the entire main antidiagonal,
so there are at least 4~boxes in row $\delta_1 - 4$,
one of which we can use for our fourth stable set.
We can work our way up the diagram of~$\lambda$
one row at a time, adding one box to our fourth stable set
in each row. This gives us a stable set of size
$\delta_1 - 3$, so indeed $\bar\delta_4 = 3$.

If $(\bar\delta_2,\bar\delta_3,\bar\delta_4)$
is $(0,0,0)$ (respectively, $(0,0,1)$),
then the only way to cover $4\delta_1$ (respectively, $4\delta_1-1$)
boxes with four stable sets, each of size at most $\delta_1$,
is for the stable sets to have sizes $\delta_1, \delta_1, \delta_1, \delta_1$
(respectively, $\delta_1, \delta_1, \delta_1, \delta_1-1$),
which is precisely what we want.

Suppose $(\bar\delta_2,\bar\delta_3,\bar\delta_4) = (1,1,1)$.
Then at most one color can occur $\delta_1$ times, so the only way to
cover $4\delta_1 - 3$ boxes with four stable sets is with stable sets of
sizes $\delta_1, \delta_1-1, \delta_1-1, \delta_1-1$.
Finally, if $(\bar\delta_2,\bar\delta_3,\bar\delta_4) = (0,1,1)$,
then at most two colors can occur $\delta_1$ times, so the only way to cover
$4\delta_1 - 2$ boxes with four stable sets is with stable sets of
sizes $\delta_1, \delta_1, \delta_1-1, \delta_1-1$.
\end{proof}

For the five outstanding cases, we can deduce some necessary consequences.

\begin{proposition}
\label{prop:fivecases}
Let $\lambda$ be a partition.
\begin{enumerate}[(a)]
\item \label{item:case002}
If $(\bar\delta_2, \bar\delta_3, \bar\delta_4) = (0,0,2)$
then $\lambda$ excludes $(1)$, $(1,1)$, and~$(2)$ at $\delta_1-1$
and excludes $(3,3,3)$ at $\delta_1-3$.
\item \label{item:case012}
If $(\bar\delta_2, \bar\delta_3, \bar\delta_4) = (0,1,2)$
then $\lambda$ excludes $(1)$ at $\delta_1-1$,
excludes $(2,2)$ at $\delta_1-2$, and excludes
$(3,3,2)$ at $\delta_1-3$.
\item \label{item:case022}
If $(\bar\delta_2, \bar\delta_3, \bar\delta_4) = (0,2,2)$
then $\lambda$ excludes $(1)$ at $\delta_1-1$.
\item \label{item:case112}
If $(\bar\delta_2, \bar\delta_3, \bar\delta_4) = (1,1,2)$
then $\lambda$ excludes $(2,1)$ at $\delta_1-2$,
and excludes $(3,3,1)$ and $(3,2,2)$ at $\delta_1-3$.
\item \label{item:case122}
If $(\bar\delta_2, \bar\delta_3, \bar\delta_4) = (1,2,2)$
then $\lambda$ excludes $(3,2,1)$ at $\delta_1-3$.
\end{enumerate}
\end{proposition}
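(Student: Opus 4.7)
The plan is to invoke the normalized corner constraint \eqref{eq:normalizedCDS}: if $\lambda$ includes $\mu$ at $d$, then for every positive integer $r$,
\begin{equation*}
\bar\delta_1 + \bar\delta_2 + \cdots + \bar\delta_r \ge r(\delta_1 - d) - |\mu|.
\end{equation*}
Contrapositively, to certify that $\lambda$ excludes a given $\mu$ at a given $d$, it suffices to exhibit a single positive integer $r$ for which the displayed inequality fails.

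Under each of the five hypotheses of Proposition~\ref{prop:fivecases}, the normalized partial sums $S_r := \bar\delta_1 + \cdots + \bar\delta_r$ are completely pinned down for $r \le 4$ (recall that $\bar\delta_1 = 0$ always). For each claimed exclusion of $\mu$ at $d = \delta_1 - s$, I would simply compute the right-hand side $rs - |\mu|$ for a well-chosen $r \le 4$ and verify that $S_r < rs - |\mu|$; Lemma~\ref{lem:include} then yields the desired exclusion.

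For instance, in case~\ref{item:case002} we have $S_3 = 0$, so taking $r = 3$ rules out the inclusion at $d = \delta_1 - 1$ of any $\mu$ with $|\mu| \le 2$ --- in particular, $(1)$, $(1,1)$, and $(2)$; and since $S_4 = 2 < 3 = 4\cdot 3 - 9$, the partition $(3,3,3)$ is excluded at $d = \delta_1 - 3$. Similarly, in case~\ref{item:case122}, $S_4 = 5 < 6 = 4 \cdot 3 - 6$ rules out $(3,2,1)$ at $\delta_1 - 3$. The remaining three cases are handled identically: each claimed exclusion is certified by a single numerical comparison $S_r < rs - |\mu|$ with $r \in \{2,3,4\}$.

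There is no genuine obstacle here: the entire proposition reduces to roughly ten one-line arithmetic checks. The only mild care required is in the choice of $r$; since the bound is most effective when $r$ is as large as possible (given the information we have), taking $r = 4$ always works in the instances where the weaker choices $r = 2$ or $r = 3$ are insufficient.
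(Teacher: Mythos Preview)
Your proposal is correct and is essentially identical to the paper's own argument: the paper also proves the proposition by routine verification, taking the contrapositive of the normalized corner constraint~\eqref{eq:normalizedCDS} and checking one numerical inequality per claimed exclusion (it spells out only the $(3,3,3)$ check in case~\ref{item:case002} and omits the rest). Your explicit tabulation of the partial sums $S_r$ and the choice of $r$ for each exclusion is a bit more detailed than the paper's treatment, but the method is the same.
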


\begin{proof}
The proof is a routine verification.
For example, if $(\bar\delta_2, \bar\delta_3, \bar\delta_4) = (0,0,2)$
then $\lambda$ must exclude~$(3,3,3)$ at $\delta_1-3$,
because otherwise Equation~\eqref{eq:normalizedCDS} with $r=4$
would imply
\begin{equation*}
2 = \bar\delta_2 + \bar\delta_3 +\bar\delta_4
\ge 4\bigl(\delta_1 - (\delta_1 - 3)\bigr) - 9 = 3,
\end{equation*}
which is false. All the other assertions are easily confirmed
using the same kind of argument; we omit the details.
\end{proof}

The way they are currently stated,
the exclusion conditions in Proposition~\ref{prop:fivecases}
are not so easy to grasp, but fortunately,
in several cases, there is an easier way to think about them.
For the remainder of this section, we shall use the notation
$(i_1,j_1), (i_2,j_2), \ldots, (i_s,j_s)$
(where $i_1 < i_2 < \cdots < i_s$)
to denote the coordinates of all the boxes along
antidiagonal~$\delta_1$ that are \emph{not} in the diagram of~$\lambda$
(at least one such box must exist, by Proposition~\ref{prop:cdsk}).

As an illustration of why this notation is useful,
suppose that $\lambda$ excludes $(1)$ at $\delta_1-1$.
Then $j_{p+1} \le j_p - 2$; i.e., there cannot be
two consecutive missing boxes in antidiagonal~$\delta_1$.
See Figure~\ref{fig:2missing},
where for clarity, we have put dots in the boxes on
the main antidiagonal;
the two consecutive missing boxes $(1,3)$ and $(2,2)$
in antidiagonal~4 force $\lij{\lambda}{1}{2} \simeq (1)$,
as indicated by the red box,
so $\lambda$ includes $(1)$ at~3.
It is intuitively easier to grasp ``no two consecutive
missing boxes in antidiagonal~$\delta_1$'' than
``$\lambda$ excludes $(1)$ at~$\delta_1-1$.''

\begin{figure}[ht]
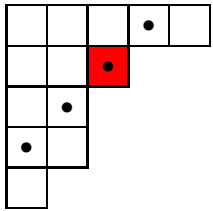

\begin{center}
\ytableausetup{centertableaux}
\begin{ytableau}
\  & \  & \ & \bullet & \  \\
\  & \  & *(red) \bullet \\
\  & \bullet \\
\bullet & \  \\
\  
\end{ytableau}
\end{center}
\caption{Two consecutive missing boxes in antidiagonal 4}
\label{fig:2missing}
\end{figure}

This type of reasoning will play an important role in our proof
of Theorem~\ref{thm:L4}.

\begin{theorem}
\label{thm:L4}
$\LL_4$ is true.
\end{theorem}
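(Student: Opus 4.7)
The plan is to take up where Proposition~\ref{prop:L4} leaves off, doing a case analysis on the five remaining values of $(\bar\delta_2,\bar\delta_3,\bar\delta_4)$, namely $(0,0,2)$, $(0,1,2)$, $(0,2,2)$, $(1,1,2)$, $(1,2,2)$. For each case I know the target multiset of color class sizes, and I know from Proposition~\ref{prop:fivecases} a list of inclusions that $\lambda$ must exclude.

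My first step would be to translate those exclusion conditions, in the spirit of Figure~\ref{fig:2missing}, into geometric statements about the coordinates $(i_1,j_1),\dots,(i_s,j_s)$ of the missing boxes on antidiagonal~$\delta_1$. For example, excluding $(1)$ at $\delta_1-1$ says $j_{p+1}\le j_p-2$, i.e., no two consecutive missing boxes; excluding $(2)$ and $(1,1)$ at $\delta_1-1$ further forbids a missing box from being adjacent (in either axis direction) to another missing box in the next antidiagonal out; excluding patterns like $(3,3,3)$, $(3,3,2)$, $(3,2,2)$, $(3,3,1)$, $(3,2,1)$ at $\delta_1-3$ rules out compact $3\times3$-like configurations of missing boxes near the main antidiagonal. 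These constraints pin down, up to mild local choices, how the first three or four antidiagonals look.

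Next, I would construct the four stable sets explicitly. The natural starting point is to take the main antidiagonal $\delta_1-1$ (which is entirely present by Proposition~\ref{prop:cdsk}) as one color class of size~$\delta_1$, and then to fill in color classes on antidiagonals $\delta_1-2,\delta_1-3,\delta_1-4$ modified locally near the missing boxes. When a missing box $(i_p,j_p)$ on antidiagonal $\delta_1$ blocks the naive assignment, the exclusion hypotheses guarantee that the neighboring cells on antidiagonals $\delta_1-1$ and $\delta_1-2$ are present and unused, so we can shift boxes one step up-and-right (or down-and-left) along the rows or columns meeting $(i_p,j_p)$ to repair the class. When the naive class ends up the wrong size, I would invoke the bipartite alternating-path swap from the proof of Proposition~\ref{prop:dominance}: viewing the partial assignment as a partial edge coloring of $B_\lambda$, a two-color alternating path lets me move one box from an oversized class to an undersized class without violating the Latin property, so it suffices to produce four disjoint matchings whose union has the right total size and then rebalance.

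The main obstacle will be the case $(0,0,2)$, where three classes must be of the maximum size $\delta_1$ and the fourth of size $\delta_1-2$, leaving essentially no slack: three full stable sets of size $\delta_1$ can exist only when antidiagonals $\delta_1-1$, $\delta_1-2$, and $\delta_1-3$ are all nearly complete and mesh together properly. Here the strong exclusion conditions (no $(1)$, $(1,1)$, $(2)$ at $\delta_1-1$, and no $(3,3,3)$ at $\delta_1-3$) force the missing boxes of antidiagonal $\delta_1$ to be so widely separated, and their interaction with antidiagonals $\delta_1-2$ and $\delta_1-3$ to be so constrained, that an explicit construction based on three consecutive antidiagonals plus a carefully chosen partial fourth antidiagonal succeeds. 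The cases with $\bar\delta_2=1$ are easier because the leeway in the first class can absorb a single swap, and the cases $(0,1,2)$, $(0,2,2)$ reduce to smaller sub-configurations that I expect to dispose of with the same local-modification technique. The writeup will be a collection of five explicit constructions, each verified against the known exclusion structure, and the overall difficulty lies in enumerating the local configurations near the main antidiagonal compactly enough to keep the case analysis manageable.
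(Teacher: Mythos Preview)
Your high-level framework---case analysis on the five residual values of $(\bar\delta_2,\bar\delta_3,\bar\delta_4)$, translation of the exclusions in Proposition~\ref{prop:fivecases} into spacing constraints on the missing boxes $(i_p,j_p)$ along antidiagonal~$\delta_1$, and antidiagonal-based explicit constructions---matches the paper, and for the two cases with $\bar\delta_2=1$ the paper's proofs are indeed close in spirit to ``three consecutive antidiagonals plus one more carefully built stable set.'' However, two parts of your plan do not work as stated.

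First, the alternating-path rebalancing step is in the wrong direction. Any four disjoint stable sets whose union has size~$\alpha_4$ automatically have size vector \emph{dominated by} $(\delta_1,\delta_2,\delta_3,\delta_4)$, since $a_1\le\alpha_1$, $a_1+a_2\le\alpha_2$, etc. The swap in Proposition~\ref{prop:dominance} moves a box from a larger class to a smaller one, i.e., it can only push you \emph{down} in dominance order, never up. So ``produce four matchings of the right total size and then rebalance'' is exactly the step that is not available; you must hit the target sizes directly. This removes the safety net you were counting on for the hard cases.

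Second, for the three cases with $\bar\delta_2=0$ the paper does not get by with ``consecutive antidiagonals modified locally near each missing box.'' The actual mechanism is a \emph{building-block decomposition}: the missing boxes on antidiagonal~$\delta_1$ cut $\lambda$ into pieces, each piece is (up to containing a minimal sub-shape) one of a small explicit list, each such minimal shape is colored by hand with a pattern that is \emph{not} a perturbation of four antidiagonals (see e.g.\ the height-$5$ and height-$7$ interior blocks for $(0,0,2)$, where colors $1,2,3$ weave through several antidiagonals), and then the pieces are stitched together with a few extra boxes of color~$3$ or~$4$ placed in the gaps. Your description does not contain this idea, and without it I do not see how you handle, for instance, a $(0,0,2)$ shape whose interior blocks have heights~$5$ or~$7$: three classes of full size~$\delta_1$ cannot all be antidiagonals in those blocks, and the ``local shift near a missing box'' heuristic does not produce a working pattern. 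The paper's case $(0,0,2)$ in particular requires separate hand-built colorings for each block height (with an evident but nontrivial periodic pattern for large heights) plus ad~hoc fixes when the small blocks $(3)$, $(1,1,1)$, $(4,4,4,3)$ occur; this is the content you would need to supply.
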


\begin{proof}
Proposition~\ref{prop:L4} leaves five remaining cases,
which we prove in approximately increasing order of complexity.
In each case, we let $\lambda$ be a partition whose normalized CDS
begins with the specified numbers,
and we construct disjoint stable sets $S_1$, $S_2$, $S_3$, $S_4$
of cardinalities $\delta_1$, $\delta_2$, $\delta_3$, $\delta_4$ 
respectively. We sometimes refer to the inclusion of a box in~$S_i$
as \emph{coloring} the box with color~$i$.
In our figures below, the presence of~$i$ in a box
means that the box is in~$S_i$.

\textbf{Case $(\bar\delta_2, \bar\delta_3, \bar\delta_4) = (1,2,2)$.}
We claim that there cannot be
four consecutive missing boxes in antidiagonal~$\delta_1$;
for if there were, then
by drawing a diagram analogous to Figure~\ref{fig:2missing},
we would deduce that 
$\lambda$ includes $(3,2,1)$ at $\delta_1-3$, but this is ruled out
by Proposition~\ref{prop:fivecases}\ref{item:case122}.
For $i=1,2,3$, we let $S_i$ comprise all the boxes on
antidiagonal $\delta_1 - i$.
We let $S_4$ comprise any and all
boxes on antidiagonal $\delta_1$ in the diagram of~$\lambda$,
as well as all boxes
with coordinates $(i_p,j_{p+3})$ for $1 \le p \le s-3$
(by Proposition~\ref{prop:characterization3},
$\lambda$ includes $(2,1)$ at $\delta_1-2$, so $s\ge 3$).
See Figure~\ref{fig:case122} for an example.

\begin{figure}[ht]
\begin{center}
\ytableausetup{smalltableaux}
\begin{ytableau}
\ & \ & \ & \ & \ & 3 & 2 & 1 & 4\\
\ & \ & \ & 4 & 3 & 2 & 1    \\
\ & 4 & \ & 3 & 2 & 1     \\
4 & \ & 3 & 2 & 1     \\
\ & 3 & 2 & 1 & 4 \\
3 & 2 & 1     \\
2 & 1 & 4    \\
1  
\end{ytableau}
\end{center}
\caption{Example for Case $(\bar\delta_2, \bar\delta_3, \bar\delta_4) = (1,2,2)$}
\label{fig:case122}
\end{figure}
There are two things to check: first, that $S_4$ has
size $\delta_1 - 2$, and second, that $S_4$
really is a stable set that is disjoint from the other three.
There are $\delta_1 + 1$ boxes on antidiagonal~$\delta_1$,
of which $s$ are not in the diagram of~$\lambda$,
and $\delta_1 + 1 - s$ are in~$S_4$.
So the cardinality of~$S_4$ is $(\delta_1 + 1 - s) + (s - 3) = \delta_1 - 2$,
as claimed. The row and column coordinates of the boxes of~$S_4$
are all distinct by construction, so $S_4$ is a stable set.
The crucial point is that $j_{p+3} \le j_p - 4$
because there cannot be four consecutive missing boxes
in antidiagonal~$\delta_1$. Therefore,
\begin{equation*}
i_p + j_{p+3} \le i_p + j_p - 4 = \delta_1-4,
\end{equation*}
and thus $(i_p,j_{p+3})$ does not lie on antidiagonals
$\delta_1-1$, $\delta_1-2$, or $\delta_1-3$.
So $S_4$ is indeed disjoint from the other three stable sets.

\textbf{Case $(\bar\delta_2, \bar\delta_3, \bar\delta_4) = (1,1,2)$.}
By Proposition~\ref{prop:characterization3},
$\lambda$ includes $(1)$ at $\delta_1-1$, so $s\ge 2$.
We claim that $j_{p+2} \le j_p - 4$ for $1\le p \le s-2$.
For if $j_{p+2} \ge j_p - 3$, then three out of four consecutive
boxes on antidiagonal~$\delta_1$ must be missing,
but by Proposition~\ref{prop:fivecases}\ref{item:case112},
$\lambda$ excludes $(2,1)$ at $\delta_1-2$, so
no three consecutive boxes on antidiagonal~$\delta_1$ are missing.
So somewhere in $\lambda$, one of the configurations
in Figure~\ref{fig:3of4} must exist
(where we have put dots in the boxes along the main antidiagonal).
That is, $\lambda$ must include either $(3,3,1)$ or $(3,2,2)$
at $\delta_1-3$, but this contradicts
Proposition~\ref{prop:fivecases}\ref{item:case112}.
\begin{figure}[ht]
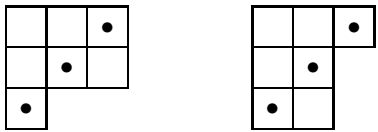

\begin{center}
\ytableausetup{nosmalltableaux}
\begin{ytableau}
\ & \ & \bullet   \\
\ & \bullet & \   \\
\bullet          \\
\end{ytableau}
$\qquad\qquad$
\begin{ytableau}
\ & \ & \bullet   \\
\ & \bullet   \\
\bullet & \         \\
\end{ytableau}
\end{center}
\caption{Missing 3 out of 4 consecutive boxes in antidiagonal~$\delta_1$}
\label{fig:3of4}
\end{figure}

For $i=1,2$, we let $S_i$ comprise all the boxes on
antidiagonal $\delta_1 -i$,
and we let $S_4$ comprise all the boxes
on antidiagonal $\delta_1 - 3$.
We let $S_3$ comprise any and all
boxes on antidiagonal $\delta_1$, as well as all boxes
with coordinates $(i_p,j_{p+2})$ for $1 \le p \le s-2$.
The rest of the argument is similar to 
the previous case; we readily verify that the cardinality
of~$S_3$ is $\delta_1 - 1$,
and because $j_{p+2} \le j_p - 4$, it follows that the
boxes with coordinates $(i_p,j_{p+2})$ lie in antidiagonal
$\delta_1 - 4$ or lower, and hence are disjoint from
$S_1$, $S_2$, and~$S_4$.
See Figure~\ref{fig:case112} for an example.
This completes the proof of
Case $(\bar\delta_2, \bar\delta_3, \bar\delta_4) = (1,1,2)$.
\begin{figure}[ht]
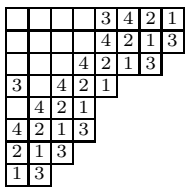

\begin{center}
\ytableausetup{smalltableaux}
\begin{ytableau}
\ & \ & \ & \ & 3 & 4 & 2 & 1  \\
\ & \ & \ & \ & 4 & 2 & 1 & 3 \\
\ & \ & \ & 4 & 2 & 1 & 3 \\
3 & \ & 4 & 2 & 1  \\
\ & 4 & 2 & 1 \\
4 & 2 & 1 & 3 \\
2 & 1 & 3 \\
1 & 3
\end{ytableau}
\end{center}
\caption{Example for Case $(\bar\delta_2, \bar\delta_3, \bar\delta_4) = (1,1,2)$}
\label{fig:case112}
\end{figure}

The remaining three cases rely on the decomposition of~$\lambda$
into what we call \emph{building blocks}.
We regard the missing boxes $(i_1,j_1), \ldots, (i_s,j_s)$
as dividing up $\lambda$ into smaller (nonempty) pieces.
More precisely, building block~$0$
is the set of boxes $(i,j)$ in~$\lambda$ with $i<i_1$ and $j_1 \le j$
(as long as this set is not empty),
building block~$1$ comprises the boxes $(i,j)$ in~$\lambda$
with $i_1 \le i < i_2$ and $j_2 \le j < j_1$, and so on.
See Figure~\ref{fig:buildingblocks} for an example,
where the building blocks are colored green
(and dots are placed along the main antidiagonal for reference).
Along antidiagonal~$\delta_1$, a building block contains all the boxes
except that it omits the box in column~0, or the box in row~0, or both;
we call these three kinds of building blocks \emph{top},
\emph{bottom}, and \emph{interior} blocks respectively.
In Figure~\ref{fig:buildingblocks}, there are three interior blocks,
and a bottom block, but no top block.

\begin{figure}[ht]
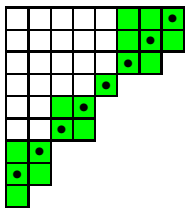

\begin{center}
\begin{ytableau}
\ & \ & \ & \ & \ & *(green) & *(green) & *(green) \bullet \\
\ & \ & \ & \ & \ & *(green) & *(green) \bullet & *(green) \\
\ & \ & \ & \ & \ & *(green) \bullet & *(green) \\
\ & \ & \ & \ & *(green) \bullet  \\
\ & \ & *(green) & *(green) \bullet  \\
\ & \ & *(green) \bullet & *(green) \\
*(green) & *(green) \bullet  \\
*(green) \bullet & *(green) \\
*(green)  \\
\end{ytableau}
\end{center}
\caption{Building Blocks}
\label{fig:buildingblocks}
\end{figure}

The construction of $S_1, \ldots, S_4$ proceeds in two phases.
In Phase~1, we show how to color individual building blocks.
Actually, for each kind of building block (top, bottom, interior),
we show only how to color the \emph{minimal} building blocks
of a given height; minimality means that any other building block
(for partitions with the specified normalized CDS values)
of the same kind and height contains a minimal one.
It suffices to show how to color minimal building blocks;
if a building block is not minimal,
we can simply ignore the extra boxes when forming $S_1, \ldots, S_4$.
In Phase~2, we color~$\lambda$ by coloring each building block
the way it was colored in Phase~1,
and then coloring additional boxes as needed.

There is one important subtlety.
Let $\Lambda(a,b,c)$ denote the set of partitions such that
$(\bar\delta_2, \bar\delta_3, \bar\delta_4) = (a,b,c)$.
A building block for $\Lambda(a,b,c)$ is often, but not always,
itself a member of $\Lambda(a,b,c)$.
For example, $(1,1,1)$ is a top block for $\Lambda(0,0,2)$
but $(1,1,1) \notin \Lambda(0,0,2)$.
To check that our colorings are valid,
we must verify that the building blocks that are themselves in $\Lambda(a,b,c)$
are correctly colored in Phase~1,
and we must also verify that Phase~2
produces correct colorings whether or not the constituent building blocks
are themselves in $\Lambda(a,b,c)$.

\textbf{Case $(\bar\delta_2, \bar\delta_3, \bar\delta_4) = (0,0,2)$.}
Proposition~\ref{prop:fivecases}\ref{item:case002} says that $\lambda$ excludes
$(1)$ at $\delta_1-1$ (which as we noted above means that there
are no two consecutive missing boxes in antidiagonal~$\delta_1$),
and that $\lambda$ excludes $(1,1)$ and $(2)$ at $\delta_1-1$.
Therefore, $j_{p+1} \le j_p - 3$;
i.e., the missing boxes in antidiagonal~$\delta_1$
must be at least three apart,
and an interior building block must have a box at
the start and end of antidiagonal $\delta_1+1$.
Figures \ref{fig:002a} and~\ref{fig:002b} illustrate the
minimal interior building blocks of heights 4--12,
along with a Phase~1 coloring
(there is no interior building block of height~3
because Proposition~\ref{prop:fivecases}\ref{item:case002}
says that $\lambda$ excludes $(3,3,3)$ at $\delta_1-3$).
It should be clear from
Figure~\ref{fig:002b} how the pattern
extends to minimal interior building blocks with larger height.

\begin{figure}[ht]
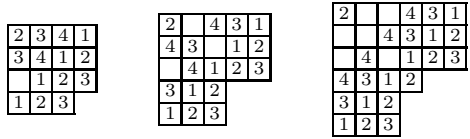

\begin{center}
\ytableausetup{smalltableaux}
\begin{ytableau}
2 & 3 & 4 & 1 \\
3 & 4 & 1 & 2 \\
\ & 1 & 2 & 3 \\
1 & 2 & 3
\end{ytableau}
\qquad
\begin{ytableau}
2 & \ & 4 & 3 & 1 \\
4 & 3 & \ & 1 & 2 \\
\ & 4 & 1 & 2 & 3 \\
3 & 1 & 2 \\
1 & 2 & 3
\end{ytableau}
\qquad
\begin{ytableau}
2 & \ & \ & 4 & 3 & 1 \\
\ & \ & 4 & 3 & 1 & 2 \\
\ & 4 & \ & 1 & 2 & 3 \\
4 & 3 & 1 & 2 \\
3 & 1 & 2 \\
1 & 2 & 3
\end{ytableau}
\end{center}
\caption{Minimal Interior Building Blocks of Heights 4, 5, 6}
\label{fig:002a}
\end{figure}
\begin{figure}[ht]
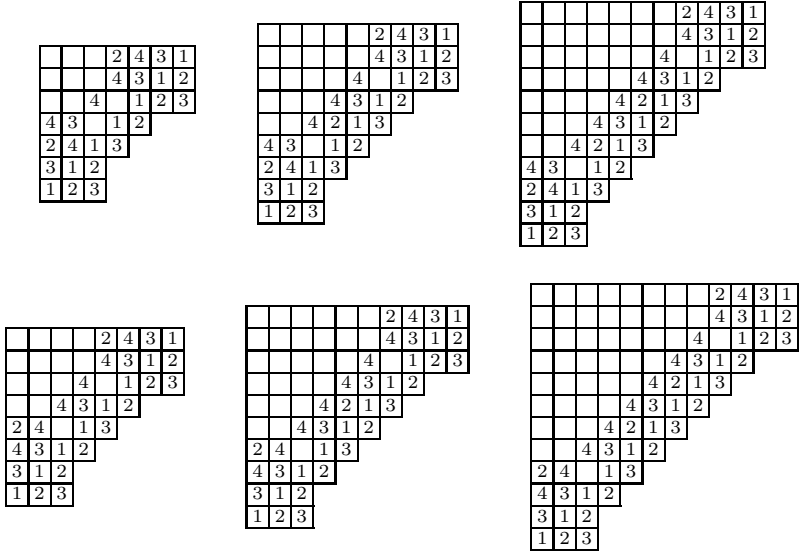

\begin{center}
\ytableausetup{smalltableaux}
\begin{ytableau}
\ & \ & \ & 2 & 4 & 3 & 1 \\
\ & \ & \ & 4 & 3 & 1 & 2 \\
\ & \ & 4 & \ & 1 & 2 & 3 \\
4 & 3 & \ & 1 & 2 \\
2 & 4 & 1 & 3 \\
3 & 1 & 2 \\
1 & 2 & 3
\end{ytableau}
\qquad
\begin{ytableau}
\ & \ & \ & \ & \ & 2 & 4 & 3 & 1 \\
\ & \ & \ & \ & \ & 4 & 3 & 1 & 2 \\
\ & \ & \ & \ & 4 & \ & 1 & 2 & 3 \\
\ & \ & \ & 4 & 3 & 1 & 2 \\
\ & \ & 4 & 2 & 1 & 3 \\
4 & 3 & \ & 1 & 2 \\
2 & 4 & 1 & 3 \\
3 & 1 & 2 \\
1 & 2 & 3
\end{ytableau}
\qquad
\begin{ytableau}
\ & \ & \ & \ & \ & \ & \ & 2 & 4 & 3 & 1 \\
\ & \ & \ & \ & \ & \ & \ & 4 & 3 & 1 & 2 \\
\ & \ & \ & \ & \ & \ & 4 & \ & 1 & 2 & 3 \\
\ & \ & \ & \ & \ & 4 & 3 & 1 & 2 \\
\ & \ & \ & \ & 4 & 2 & 1 & 3 \\
\ & \ & \ & 4 & 3 & 1 & 2 \\
\ & \ & 4 & 2 & 1 & 3 \\
4 & 3 & \ & 1 & 2 \\
2 & 4 & 1 & 3 \\
3 & 1 & 2 \\
1 & 2 & 3
\end{ytableau}
\\ \ \\ \ \\
\begin{ytableau}
\ & \ & \ & \ & 2 & 4 & 3 & 1 \\
\ & \ & \ & \ & 4 & 3 & 1 & 2 \\
\ & \ & \ & 4 & \ & 1 & 2 & 3 \\
\ & \ & 4 & 3 & 1 & 2 \\
2 & 4 & \ & 1 & 3 \\
4 & 3 & 1 & 2 \\
3 & 1 & 2 \\
1 & 2 & 3
\end{ytableau}
\qquad
\begin{ytableau}
\ & \ & \ & \ & \ & \ & 2 & 4 & 3 & 1 \\
\ & \ & \ & \ & \ & \ & 4 & 3 & 1 & 2 \\
\ & \ & \ & \ & \ & 4 & \ & 1 & 2 & 3 \\
\ & \ & \ & \ & 4 & 3 & 1 & 2 \\
\ & \ & \ & 4 & 2 & 1 & 3 \\
\ & \ & 4 & 3 & 1 & 2 \\
2 & 4 & \ & 1 & 3 \\
4 & 3 & 1 & 2 \\
3 & 1 & 2 \\
1 & 2 & 3
\end{ytableau}
\qquad
\begin{ytableau}
\ & \ & \ & \ & \ & \ & \ & \ & 2 & 4 & 3 & 1 \\
\ & \ & \ & \ & \ & \ & \ & \ & 4 & 3 & 1 & 2 \\
\ & \ & \ & \ & \ & \ & \ & 4 & \ & 1 & 2 & 3 \\
\ & \ & \ & \ & \ & \ & 4 & 3 & 1 & 2 \\
\ & \ & \ & \ & \ & 4 & 2 & 1 & 3 \\
\ & \ & \ & \ & 4 & 3 & 1 & 2 \\
\ & \ & \ & 4 & 2 & 1 & 3 \\
\ & \ & 4 & 3 & 1 & 2 \\
2 & 4 & \ & 1 & 3 \\
4 & 3 & 1 & 2 \\
3 & 1 & 2 \\
1 & 2 & 3
\end{ytableau}
\end{center}
\caption{Minimal Interior Building Blocks of Heights 7--12}
\label{fig:002b}
\end{figure}

The minimal top (respectively, bottom) building blocks,
together with their colorings,
may be obtained by deleting the top three rows
(respectively, left three columns) from
the minimal interior building blocks.
All these building blocks, except $(3)$ and $(1,1,1)$
and $(4,4,4,3)$, are in $\Lambda(0,0,2)$,
and it is readily verified that the given colorings are valid.

In Phase~2, after coloring each building block as in Phase~1,
$S_1$ will always already have the correct cardinality.
Moreover, in each building block, $S_2$ and~$S_3$ are the same
size as~$S_1$, so they also already have the correct cardinality.
Thus we just need to show that $S_4$ can be suitably enlarged.
To see this, first note that if $(3)$ and $(1,1,1)$ and $(4,4,4,3)$
are not involved, then between any two consecutive building blocks,
we can put two more~4's in antidiagonal $\delta_1-3$,
and this will make $S_4$ the right size without putting
any 4's in a row or column with an existing~4.
See Figure~\ref{fig:002concat} for an example.

\begin{figure}[ht]
\begin{center}
\begin{ytableau}
\  & \  & \  & *(green) 2 & *(green) \ & *(green) 4 & *(green) 3 & *(green) 1 \\
\  & \  & \  & *(green) 4 & *(green) 3 & *(green) \ & *(green) 1 & *(green) 2 \\
\  & \  & \  & *(green) \ & *(green) 4 & *(green) 1 & *(green) 2 & *(green) 3 \\
\  & \  & 4  & *(green) 3 & *(green) 1 & *(green) 2 \\
\  & 4  & \  & *(green) 1 & *(green) 2 & *(green) 3 \\
*(green) \ & *(green) 3 & *(green) 1 \\
*(green) 4 & *(green) 1 & *(green) 2 \\
*(green) 1 & *(green) 2 & *(green) 3 \\
*(green) 2 \\
*(green) 3
\end{ytableau}
\end{center}
\caption{Case $(\bar\delta_2, \bar\delta_3, \bar\delta_4) = (0,0,2)$:
Phase~2}
\label{fig:002concat}
\end{figure}

Finally, if any of the building blocks $(3)$ or $(1,1,1)$ or $(4,4,4,3)$
are involved,
we put a 4 in antidiagonal $\delta_1-2$ between building blocks,
and if $(4,4,4,3)$ is involved, we also put a 4 in
the one as-yet-uncolored box of $(4,4,4,3)$ if necessary.
It is easy to check that this construction works.
See Figure~\ref{fig:002concat2}.

\begin{figure}[ht]
\begin{center}
\begin{ytableau}
\ & \ & \ & \ & \ & 4 & *(green) 1 & *(green) 2 & *(green) 3 \\
\ & \ & *(green) 2 & *(green) 3 & *(green) 4 & *(green) 1 \\
\ & \ & *(green) 3 & *(green) 4 & *(green) 1 & *(green) 2 \\
\ & \ & *(green) 4 & *(green) 1 & *(green) 2 & *(green) 3 \\
\ & 4 & *(green) 1 & *(green) 2 & *(green) 3 \\
*(green) 3 & *(green) 1 \\
*(green) 1 & *(green) 2 \\
*(green) 2 & *(green) 3
\end{ytableau}
\qquad
\begin{ytableau}
\ & \ & \ & \ & *(green) 2 & *(green) 3 & *(green) 4 & *(green) 1 \\
\ & \ & \ & \ & *(green) 3 & *(green) 4 & *(green) 1 & *(green) 2 \\
\ & \ & \ & \ & *(green) 4 & *(green) 1 & *(green) 2 & *(green) 3 \\
\ & \ & \ & 4 & *(green) 1 & *(green) 2 & *(green) 3 \\
*(green) 2 & *(green) 3 & *(green) 4 & *(green) 1 \\
*(green) 3 & *(green) 4 & *(green) 1 & *(green) 2 \\
*(green) \ & *(green) 1 & *(green) 2 & *(green) 3 \\
*(green) 1 & *(green) 2 & *(green) 3
\end{ytableau}
\end{center}
\caption{Case $(\bar\delta_2, \bar\delta_3, \bar\delta_4) = (0,0,2)$:
Phase~2}
\label{fig:002concat2}
\end{figure}

\textbf{Case $(\bar\delta_2, \bar\delta_3, \bar\delta_4) = (0,1,2)$.}
Proposition~\ref{prop:fivecases}\ref{item:case012} says that
$\lambda$ excludes $(1)$ at $\delta_1 - 1$
and excludes $(2,2)$ at $\delta_1 - 2$;
by arguments similar to those we have seen before,
this implies that $j_{p+1} \le j_p-3$.
Moreover, Proposition~\ref{prop:fivecases}\ref{item:case012} says that
$\lambda$ excludes $(3,3,2)$ at $\delta_1 - 3$,
so if $j_{p+1} = j_p - 3$ then $\lij{\lambda}{i_p}{j_{p+1}} \simeq (3,3,3)$.
Therefore:
\begin{enumerate} \item \label{item:012a}
There is no restriction on the heights of minimal top and bottom building blocks,
and they contain no boxes in antidiagonals $\delta_1+1$ or higher.
\item \label{item:012c}
The shape $(3,3,3)$ is a minimal interior building block of height~3.
\item \label{item:012b}
The remaining minimal interior building blocks have height at least~4,
and they contain no boxes in antidiagonals $\delta_1+1$ or higher.
\end{enumerate}
For Phase~1,
in both Cases \ref{item:012a} and~\ref{item:012b},
we put all boxes (if any) in the diagram of~$\lambda$
on antidiagonal $\delta_1 - 1$
(respectively, $\delta_1$, $\delta_1-2$, $\delta_1-3$)
in~$S_1$ (respectively, $S_2$, $S_3$, $S_4$).
In Case~\ref{item:012b}, we also put the top left box $(0,0)$ in~$S_2$.
See Figure~\ref{fig:012building},
which also shows how we color $(3,3,3)$.

\begin{figure}[ht]
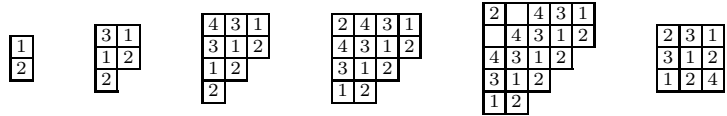

\begin{center}
\begin{ytableau}
1 \\
2
\end{ytableau}
\qquad
\begin{ytableau}
3 & 1 \\
1 & 2 \\
2
\end{ytableau}
\qquad
\begin{ytableau}
4 & 3 & 1 \\
3 & 1 & 2 \\
1 & 2 \\
2
\end{ytableau}
\qquad
\begin{ytableau}
2 & 4 & 3 & 1 \\
4 & 3 & 1 & 2 \\
3 & 1 & 2 \\
1 & 2
\end{ytableau}
\qquad
\begin{ytableau}
2 & \ & 4 & 3 & 1 \\
\ & 4 & 3 & 1 & 2 \\
4 & 3 & 1 & 2 \\
3 & 1 & 2 \\
1 & 2
\end{ytableau}
\qquad
\begin{ytableau}
2 & 3 & 1 \\
3 & 1 & 2 \\
1 & 2 & 4
\end{ytableau}
\end{center}
\caption{Case $(\bar\delta_2, \bar\delta_3, \bar\delta_4) = (0,1,2)$:
Building Blocks}
\label{fig:012building}
\end{figure}

For Phase~2, if $(3,3,3)$ is not one of the building blocks,
we simply fill antidiagonal $\delta_1-2$ with 3's and
antidiagonal $\delta_1-3$ with 4's.
If $(3,3,3)$ is involved, then see Figure~\ref{fig:012concat}.
Again, the verification that this construction is correct is
straightforward and we omit the details.
\begin{figure}[ht]
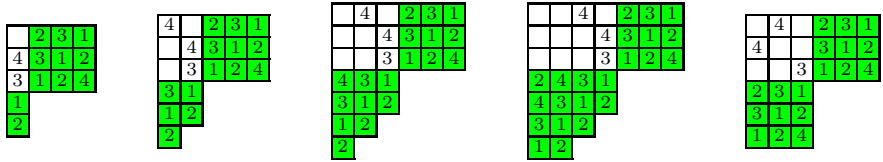

\begin{center}
\begin{ytableau}
\ & *(green) 2 & *(green) 3 & *(green) 1 \\
4 & *(green) 3 & *(green) 1 & *(green) 2 \\
3 & *(green) 1 & *(green) 2 & *(green) 4 \\
*(green) 1 \\
*(green) 2
\end{ytableau}
\qquad
\begin{ytableau}
4 & \ & *(green) 2 & *(green) 3 & *(green) 1 \\
\ & 4 & *(green) 3 & *(green) 1 & *(green) 2 \\
\ & 3 & *(green) 1 & *(green) 2 & *(green) 4 \\
*(green) 3 & *(green) 1 \\
*(green) 1 & *(green) 2 \\
*(green) 2
\end{ytableau}
\qquad
\begin{ytableau}
\ & 4 & \ & *(green) 2 & *(green) 3 & *(green) 1 \\
\ & \ & 4 & *(green) 3 & *(green) 1 & *(green) 2 \\
\ & \ & 3 & *(green) 1 & *(green) 2 & *(green) 4 \\
*(green) 4 & *(green) 3 & *(green) 1 \\
*(green) 3 & *(green) 1 & *(green) 2 \\
*(green) 1 & *(green) 2 \\
*(green) 2
\end{ytableau}
\qquad
\begin{ytableau}
\ & \ & 4 & \ & *(green) 2 & *(green) 3 & *(green) 1 \\
\ & \ & \ & 4 & *(green) 3 & *(green) 1 & *(green) 2 \\
\ & \ & \ & 3 & *(green) 1 & *(green) 2 & *(green) 4 \\
*(green) 2 & *(green) 4 & *(green) 3 & *(green) 1 \\
*(green) 4 & *(green) 3 & *(green) 1 & *(green) 2 \\
*(green) 3 & *(green) 1 & *(green) 2 \\
*(green) 1 & *(green) 2
\end{ytableau}
\qquad
\begin{ytableau}
\ & 4 & \ & *(green) 2 & *(green) 3 & *(green) 1 \\
4 & \ & \ & *(green) 3 & *(green) 1 & *(green) 2 \\
\ & \ & 3 & *(green) 1 & *(green) 2 & *(green) 4 \\
*(green) 2 & *(green) 3 & *(green) 1 \\
*(green) 3 & *(green) 1 & *(green) 2 \\
*(green) 1 & *(green) 2 & *(green) 4
\end{ytableau}
\end{center}
\caption{Case $(\bar\delta_2, \bar\delta_3, \bar\delta_4) = (0,1,2)$:
Phase~2}
\label{fig:012concat}
\end{figure}

\textbf{Case $(\bar\delta_2, \bar\delta_3, \bar\delta_4) = (0,2,2)$.}
Proposition~\ref{prop:fivecases}\ref{item:case022} says that
$\lambda$ excludes $(1)$ at $\delta_1 - 1$.
Therefore, the partition~$(1)$ is not a valid building block;
the minimal building blocks of height at most~5
are those shown in Figure~\ref{fig:022building}.
The larger minimal building blocks are interior building blocks
that contain all the boxes in antidiagonal~$\delta_1$
except the box in row~0 and the box in column~0,
and no other boxes.
(We need not separately consider top and bottom building blocks of height at least~4
because they contain a valid interior building block as a proper subset.)
To color one of these larger building blocks,
we let $S_1$ and~$S_4$ comprise all of
antidiagonals $\delta-1-1$ and $\delta_1-3$ respectively;
we let $S_3$ comprise all of antidiagonal $\delta_1-2$
except for the box in column~0,
and we let $S_2$ comprise all of antidiagonal~$\delta_1$
plus the box with coordinates $(0,0)$.
\begin{figure}[ht]
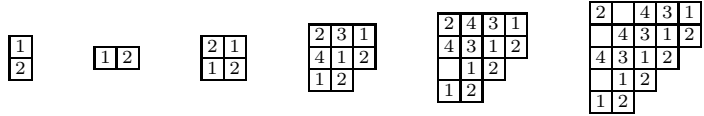

\begin{center}
\begin{ytableau}
1 \\
2
\end{ytableau}
\qquad
\begin{ytableau}
1 & 2
\end{ytableau}
\qquad
\begin{ytableau}
2 & 1 \\
1 & 2
\end{ytableau}
\qquad
\begin{ytableau}
2 & 3 & 1 \\
4 & 1 & 2 \\
1 & 2
\end{ytableau}
\qquad
\begin{ytableau}
2 & 4 & 3 & 1 \\
4 & 3 & 1 & 2 \\
\ & 1 & 2 \\
1 & 2
\end{ytableau}
\qquad
\begin{ytableau}
2 & \ & 4 & 3 & 1 \\
\ & 4 & 3 & 1 & 2 \\
4 & 3 & 1 & 2 \\
\ & 1 & 2 \\
1 & 2
\end{ytableau}
\end{center}
\caption{Case $(\bar\delta_2, \bar\delta_3, \bar\delta_4) = (0,2,2)$:
Building Blocks}
\label{fig:022building}
\end{figure}

In Phase~2, if the building blocks are sufficiently large,
then when we concatenate
them, we simply add two more 3's along antidiagonal $\delta_1-2$
and two more 4's along antidiagonal $\delta_1-3$.
The smaller building blocks are handled as shown in
Figure~\ref{fig:022concat}.
\begin{figure}[ht]
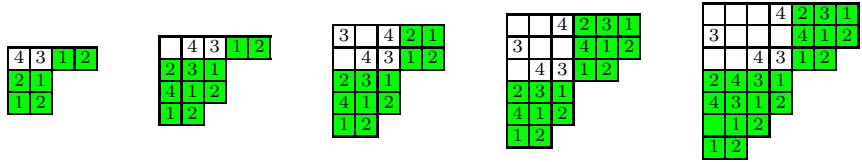

\begin{center}
\begin{ytableau}
4 & 3 & *(green) 1 & *(green) 2 \\
*(green) 2 & *(green) 1 \\
*(green) 1 & *(green) 2
\end{ytableau}
\qquad
\begin{ytableau}
\ & 4 & 3 & *(green) 1 & *(green) 2 \\
*(green) 2 & *(green) 3 & *(green) 1 \\
*(green) 4 & *(green) 1 & *(green) 2 \\
*(green) 1 & *(green) 2
\end{ytableau}
\qquad
\begin{ytableau}
3 & \ & 4 & *(green) 2 & *(green) 1 \\
\ & 4 & 3 & *(green) 1 & *(green) 2 \\
*(green) 2 & *(green) 3 & *(green) 1 \\
*(green) 4 & *(green) 1 & *(green) 2 \\
*(green) 1 & *(green) 2
\end{ytableau}
\qquad
\begin{ytableau}
\ & \ & 4 & *(green) 2 & *(green) 3 & *(green) 1\\
3 & \ & \ & *(green) 4 & *(green) 1 & *(green) 2 \\
\ & 4 & 3 & *(green) 1 & *(green) 2 \\
*(green) 2 & *(green) 3 & *(green) 1 \\
*(green) 4 & *(green) 1 & *(green) 2 \\
*(green) 1 & *(green) 2
\end{ytableau}
\qquad
\begin{ytableau}
\ & \ & \ & 4 & *(green) 2 & *(green) 3 & *(green) 1\\
3 & \ & \ & \ & *(green) 4 & *(green) 1 & *(green) 2 \\
\ & \ & 4 & 3 & *(green) 1 & *(green) 2 \\
*(green) 2 & *(green) 4 & *(green) 3 & *(green) 1 \\
*(green) 4 & *(green) 3 & *(green) 1 & *(green) 2 \\
*(green) \ & *(green) 1 & *(green) 2 \\
*(green) 1 & *(green) 2
\end{ytableau}
\end{center}
\caption{Case $(\bar\delta_2, \bar\delta_3, \bar\delta_4) = (0,2,2)$:
Phase~2}
\label{fig:022concat}
\end{figure}
% This completes the proof of Theorem~\ref{thm:L4}.
\end{proof}

\section{Concluding Remarks}

We believe that there is some hope that the coloring algorithms
in the proof of Theorem~\ref{thm:L4} can be generalized to prove
the full conjecture, though at the moment, there remain several
annoying ad hoc steps.

Another possible approach to proving the conjecture,
which we have not investigated, is to examine closely what
happens if one takes~$B_\lambda$ and applies the known
general algorithms for constructing maximum-cardinality
unions of $i$ matchings in a bipartite graph,
for $i=1,2,3\ldots\,$. For an arbitrary bipartite graph,
these unions of $i$ matchings will not ``nest'' nicely,
but maybe one can see how to make them nest in the case of~$B_\lambda$.

It is also natural to ask more generally which classes of graphs
are CDS-colorable. We suspect that CDS-colorability is NP-complete,
but perhaps there are interesting classes of CDS-colorable graphs.
Griggs~\cite{Gri} showed that incomparability graphs of
symmetric chain orders are CDS-colorable.
It is an unpublished result of the first author
(proved independently by others~\cite{F-S,MMS})
that indifference graphs (also known as unit interval graphs)
are CDS-colorable, and that the coloring can be constructed
by scanning the vertices from left to right,
assigning the lowest available color at each step.
It may also be possible to extract some examples from the
literature on integer multiflows~\cite[Volume~C]{S};
there is no general ``max-flow min-cut'' theorem in this
setting, but some partial results are known, which may be relevant.

Finally, for those with an interest in Rota's basis conjecture,
here is an ambitious generalization to matroids of the Latin Tableau Conjecture.
Let $\lambda$ be a partition, let $\delta$ denote the CDS of~$G_\lambda$,
and let $\mu=\delta'$ be the conjugate partition (i.e.,
the diagram of~$\mu$ is obtained from the diagram of~$\delta$
by transposing rows and columns).
Let $M$ be any matroid that is a disjoint union of independent sets
of cardinalities $\mu_1, \mu_2, \ldots\,$.
Then we may conjecture that there is a way to place the elements of~$M$
into the diagram of~$\lambda$, one element per box,
such that the rows and the columns are all independent sets of~$M$.
We have not made any serious attempt to disprove this conjecture,
so maybe it is false, but even if it is false, perhaps the counterexamples
will help illuminate the Latin Tableau Conjecture
and/or Rota's basis conjecture.

\section{Acknowledgments}

We thank Katharine C. Walker for helpful conversations and ideas in
the early stages of this research project.

\end{document}